\tikzset{
  symbol/.style={
    draw=none,
    every to/.append style={
      edge node={node [sloped, allow upside down, auto=false]{$#1$}}}
  }
}
\newtheorem{thm}{Theorem}[section]
\newtheorem{prop}[thm]{Proposition}
\newtheorem{lem}[thm]{Lemma}
\newtheorem{cor}[thm]{Corollary}
\newtheorem{thmalpha}{Theorem}
\newtheorem{coralpha}[thmalpha]{Corollary}
\newcommand{\K}{\mathcal{K}}
\newcommand{\Ll}{\mathcal{L}}
\newcommand{\M}{\mathcal{M}}
\newcommand{\N}{\mathcal{N}}
\newcommand{\googlebooks}[1]{(preview at \href{https://books.google.com/books?id=#1}{google books})}
\newcommand{\numdam}[1]{}
\theoremstyle{remark}
\theoremstyle{definition}
\newtheorem{ex}[thm]{Example}
\newtheorem{defn}[thm]{Definition}
\newtheorem{rmk}[thm]{Remark}
\DeclareMathOperator{\tr}{tr}
\DeclareMathOperator{\End}{End}
\DeclareMathOperator{\id}{id}
\DeclareMathOperator{\Irr}{Irr}
\DeclareMathOperator{\RHS}{RHS}
\DeclareMathOperator{\LHS}{LHS}
\DeclareMathOperator{\re}{Re}
\DeclareMathOperator{\im}{Im}
\title{Noncommutative Minkowski integral inequality and \\
a unitary categorification criterion for fusion rings}
\author{Junhwi Lim}
\date{}
\begin{document}

\maketitle
\begin{abstract}
    We prove a noncommutative analogue of Minkowski's integral inequality for commuting squares of tracial von Neumann algebras. The inequality implies a necessary condition for a quadruple of graphs to be realized as inclusion graphs of a commuting square of multi-matrix algebras. As a corollary, we obtain a unitary categorification criterion for based rings, in particular, fusion rings. 
\end{abstract}
\section{Introduction}
A central question in the construction of fusion categories is whether a fusion ring $\mathcal{R}$ is the Grothendieck ring of some fusion category. If such a category exists, then it is called a \textit{categorification} of $\mathcal{R}$.
A particularly interesting case is when the fusion category is unitary, as it encodes generalized symmetries in operator algebras and quantum physics.  
The construction of unitary fusion categories is closely related to the construction of subfactors. Indeed, using Popa's reconstruction theorem \cite{MR1055708,MR1278111}, we can construct a subfactor from a category and its Q-system \cite{MR1966524}. Another related question in subfactor theory is whether a quadruple of graphs constitutes inclusion graphs of a nondegenerate commuting square of multi-matrix algebras. If such a commuting square exists, then we can construct a hyperfinite $\rm{II}_1$ subfactor via the basic construction \cite{MR1473221}.

The obstruction to categorification of a fusion ring is the \textit{pentagon equation}, or the \textit{$6j$-symbols}. The construction of a unitary associator satisfying the pentagon equation involves solving a system of quadratic equations in a large number of variables and their complex conjugates. The construction of a commuting square from graphs also involves solving similar equations, called the \textit{biunitary condition} \cite{MR1473221}. However, there is no known systematic way to solve them, and in many cases there is no solution.

Liu, Palcoux and Wu proved a remarkable necessary condition for a fusion ring to admit a unitary categorification \cite{MR4292959}. This condition can be used as an efficient criterion for excluding fusion rings that do not admit a unitary categorification without having to verify the pentagon equation. Their criterion uses the quantum double construction \cite{MR1966525} and the positivity of convolution for unitary fusion categories. This criterion was later reformulated in terms of operator positivity and complete positivity of comultiplication and was called the \textit{primary criterion} in \cite{MR4686667}. Surprisingly, among 28,541 fusion rings in \cite{MR4644689}, 19,738 of them (about 68.37\%) do not pass this criterion. Etingof, Nikshych and Ostrik gave an alternative proof of the primary criterion using formal codegree and the Drinfeld center of the category \cite{MR4874555}. They also relate it to the Isaacs property on fusion categories.

Huang, Liu, Palcoux and Wu established variants of the primary criteria called \textit{localized} criteria and \textit{reduced twisted} criteria in \cite{MR4686667}. These criteria only use partial information on fusion rules. Thus, they can exclude multiple fusion rings with similar fusion rules at once and use much smaller matrices than the primary criteria.

While the criteria of Huang, Liu, Palcoux, and Wu are remarkably powerful, it is not known whether they can be generalized to the unitary categorification of \textit{based rings}, which is an infinite generalization of fusion rings \cite[Question 7.1]{MR4686667}. 
In this paper, we introduce a condition that serves as a unitary categorification criterion for based rings and more generally, a criterion for examining whether graphs admit solutions to the biunitary condition. 
Our criterion uses a noncommutative analogue of the \textit{Minkowski integral inequality}:
\begin{thmalpha}\label{NCMinA}
    Let $(\M,\tr)$ be a tracial von Neumann algebra,  $\N\subset \Ll,\K\subset \M$ be von Neumann subalgebras and $x\in \M$ be a positive element satisfying the following:
    \begin{enumerate}
        \item Either $\Ll$ and $\K$ commute, or $x$ is in the center of $\M$;\label{variants}
        \item The quadrilateral inclusion $$
    \begin{tikzcd}
        \Ll\arrow[r,symbol=\subset, "E_\Ll" {yshift=3pt}] & \M\\
        \N\arrow[r,symbol=\subset,"E_\N"'{yshift=-3pt}] \arrow[u,symbol=\subset,"E_\N"{xshift=-3pt}] & \K\arrow[u,symbol=\subset, "E_\K"' {xshift=3pt}]
    \end{tikzcd}
    $$
    is a commuting square where $E_\Ll$, $E_\K$, and $E_\N$ are the unique $\tr$-preserving conditional expectations.
    \end{enumerate}
    Then for $1\le p<\infty$ we have
    \begin{equation*}
        E_\Ll(E_{\K}(x)^p)^{1/p}\le E_{\K}(E_\Ll(x^p)^{1/p}).
    \end{equation*}
\end{thmalpha}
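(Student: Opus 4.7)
The plan is to adapt the classical duality proof of Minkowski's integral inequality to the tracial noncommutative setting, with the commuting square serving in place of Fubini's theorem.

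First, using the commuting square identities $E_\Ll|_\K=E_\N|_\K$ and $E_\K|_\Ll=E_\N|_\Ll$, I would rewrite the desired inequality as
\[
E_\N(E_\K(x)^p)^{1/p}\le E_\N(E_\Ll(x^p)^{1/p}),
\]
where both sides lie manifestly in $\N$. By faithfulness of the trace on $\N$, the operator order in $\N_+$ is detected by the normal states $\eta\mapsto\tr(\eta\,\cdot\,)$, $\eta\in\N_+$, so it suffices to establish the scalar inequality
\[
\tr\bigl(\eta\,E_\N(E_\K(x)^p)^{1/p}\bigr)\le \tr\bigl(\eta\,E_\Ll(x^p)^{1/p}\bigr), \qquad \eta\in\N_+,
\]
where I used $\eta\in\N\subset\Ll$ and the module property of $E_\N$ to absorb the outer conditional expectation on the right.

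Next, I would imitate the classical duality chain $\|F\|_p^p=\int F\cdot F^{p-1}\le \|F^{p-1}\|_q\int\|f(\cdot,y)\|_p\,d\nu$. Setting $F=E_\K(x)$, I would apply the noncommutative Hölder inequality with conjugate exponents $p$ and $q=p/(p-1)$ to $\tr(\eta E_\K(x)^p)=\tr(\eta F^{p-1}\cdot F)$, then use the adjointness $\tr(a\,E_\K(b))=\tr(E_\K(a)\,b)$ to pull the inner $F$ out to $x$, and finally apply Hölder a second time, at the $\Ll$-conditional level, to produce the $\Ll$-valued $L^p$-type quantity $E_\Ll(x^p)^{1/p}$. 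Rearranging the resulting chain and dividing by the common factor $\tr(\eta E_\N(F^p))^{(p-1)/p}$ --- the noncommutative analog of $\|F\|_p^{p-1}$ --- yields the desired inequality. The hypothesis (1) is crucial here: in the commuting case, $F\in\K$ commutes with every element of $\Ll$, permitting free rearrangement of $F^{p-1}$ with $\Ll$-side factors under the trace; in the central case, $x$ and all operators derived from it by conditional expectations and functional calculus commute, so ordering ambiguities vanish altogether.

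I expect the principal obstacle to be the clean execution of the two nested Hölder factorizations across the layers of conditional expectation $E_\K$ and $E_\Ll$: the classical argument enjoys Fubini's unconditional freedom to reorder integrations, whereas the noncommutative proof must combine the commuting-square identity $E_\Ll E_\K=E_\K E_\Ll=E_\N$ with the commutation supplied by hypothesis (1) to simulate this freedom. Once the scalar inequality is established, standard approximation --- spectral truncation of $x$ together with norm-continuity of conditional expectations on noncommutative $L^p$-spaces --- extends the result from bounded-spectrum positive elements to arbitrary positive elements of $(\M,\tr)$.
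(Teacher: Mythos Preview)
Your high-level outline matches the paper's: reduce to a scalar inequality against projections in $\N$, rewrite $\tr(rF^p)=\tr(rF^{p-1}E_\K(x))=\tr(rF^{p-1}x)$ via the $\K$-module property (your ``adjointness'' step), and then try to produce $E_\Ll(x^p)^{1/p}$ by a H\"older-type estimate. Steps one and two are exactly what the paper does. The gap is entirely in your third step.

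What you call ``H\"older a second time, at the $\Ll$-conditional level'' is not a known inequality, and the obvious attempts fail. If you apply the ordinary noncommutative H\"older inequality to $\tr(rF^{p-1}x)$ you obtain a bound involving $\tr(rx^p)^{1/p}=\tr(rE_\Ll(x^p))^{1/p}$, not $\tr(rE_\Ll(x^p)^{1/p})$; by concavity of $t\mapsto t^{1/p}$ the former dominates the latter, so the inequality goes the wrong way. An operator-valued inequality of the shape $E_\Ll(F^{p-1}x)\le E_\Ll(x^p)^{1/p}\cdot E_\Ll(F^p)^{(p-1)/p}$ is not available either: the two factors on the right need not commute, so the right-hand side is not even self-adjoint. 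The paper confronts exactly this obstacle and resolves it with two ingredients you are missing. First, a \emph{reduced H\"older inequality} (Lemma~\ref{Holderlike}), $\tr(qaqb)\le\tr(qa^pq)^{1/p}\tr(qb^{p'}q)^{1/p'}$ for a projection $q$, proved via the three-lines theorem; this is the correct substitute for ``H\"older at the conditional level.'' Second, a \emph{double} spectral decomposition, taking projections $q_{n,i}$ of $E_\Ll(x^p)$ and $r_{t,j}$ of $rE_\Ll(E_\K(x)^p)r$, so that on each piece both $\tr(\cdot\,x^p)^{1/p}$ and $\tr(\cdot\,F^p)^{(p-1)/p}$ become controllable scalars times traces of projections. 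Hypothesis~(1) is used precisely to ensure these spectral projections commute with one another and with $F$, allowing the reduced H\"older lemma to be applied to the product projection $r_{t,j}q_{n,i}$.

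A secondary point: even granting your step three, dividing by $\tr(\eta E_\N(F^p))^{(p-1)/p}$ yields $\tr(\eta F^p)^{1/p}$ on the left, whereas your stated target is $\tr(\eta\,E_\N(F^p)^{1/p})$. For a projection $\eta=r$ these are related by Jensen's inequality in the right direction (with a harmless factor $\tr(r)^{(p-1)/p}\le1$), but you should say so. The paper sidesteps this entirely: the $r_{t,j}$ decomposition handles the $1/p$-th power on the left directly, with no appeal to Jensen.
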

In Theorem \ref{NCMinA}, the conditional expectations replace the partial integrals in the original Minkowski integral inequality. The proof of the original inequality uses the Fubini theorem and then the H\"older inequality to one of the partial integrals (see for instance, \cite[Proposition 1.3]{MR2768550}). We assume the commuting square condition in Theorem \ref{NCMinA}, as it plays the role of a `noncommutative Fubini theorem.'
However, the H\"older inequality for partial integral cannot be replaced with the well-known \textit{noncommutative H\"older inequality} \cite[Theorem 2.1.4]{NCLp}, as it does not take into account the operator-valued conditional expectation. Thus, we establish a suitable generalization of the H\"older inequality in Lemma \ref{Holderlike}.

As a consequence of Theorem \ref{NCMinA}, we obtain the following result for commuting squares:
\begin{thmalpha}\label{commuting squareA}
    Let 
    \begin{equation*}
        \begin{tikzcd}
            A_{01}\arrow[r,symbol=\subset,"T_{01}^{11}"{yshift = 5pt}]&A_{11}\\
            A_{00}\arrow[r,symbol=\subset,"T_{00}^{10}"'{yshift = -5pt}] \arrow[u,symbol=\subset,"T_{00}^{01}" {xshift = -5pt}]&A_{10}\arrow[u,symbol=\subset,"T_{10}^{11}"'{xshift = 5pt}]
        \end{tikzcd}
    \end{equation*}
    be a commuting square of multi-matrix algebras. Here $T_{ij}^{kl}$ is the inclusion matrix for $A_{ij}\subset A_{kl}$.
    Let $\tr:A_{11}\rightarrow \mathbb{C}$ be a faithful trace on $A_{11}$ with trace vector $v^{11}=(v_k^{11})_{k=1}^{n_{11}}$ and let $v^{ij}=(v_{k}^{ij})_{k=1}^{n_{ij}}$ be the induced trace vector of $A_{ij}$. Then for all $1\le p<\infty$, $a_l\ge 0$ for $1\le l\le n_{11}$ and $1\le i\le n_{00}$, we have
    \begin{equation*}
        \left(\sum_{j=1}^{n_{10}}\left(\sum_{l=1}^{n_{11}} \frac{(T_{10}^{11})_{jl}\ a_lv_l^{11}}{v_j^{10}}\right)^p \frac{(T_{00}^{10})_{ij}\ v_j^{10}}{v_i^{00}}\right)^{\frac{1}{p}} \le \sum_{k=1}^{n_{01}}\left(\sum_{l=1}^{n_{11}}  \frac{(T_{01}^{11})_{kl}\ a_l^pv_l^{11}}{v_k^{01}}\right)^{\frac{1}{p}}\frac{(T_{00}^{01})_{ik}\ v_k^{01}}{v_i^{00}}.
    \end{equation*}
\end{thmalpha}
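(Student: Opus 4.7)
The approach is to derive Theorem~\ref{commuting squareA} as an immediate consequence of Theorem~\ref{NCMinA}. I take $\M = A_{11}$, $\Ll = A_{01}$, $\K = A_{10}$, $\N = A_{00}$, and test the inequality on the positive central element
\[
x := \sum_{l=1}^{n_{11}} a_l\, p_l^{11},
\]
where $p_l^{11}$ is the minimal central projection of $A_{11}$ corresponding to the $l$-th simple summand. Since $x$ lies in the center of $\M$, hypothesis~(\ref{variants}) of Theorem~\ref{NCMinA} holds through its second alternative, and the commuting square condition is given.

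The main computational input is the explicit formula for the trace-preserving conditional expectation between multi-matrix algebras. For an inclusion $B\subset A$ with inclusion matrix $T$ and compatible trace vectors $v^B=(v_j^B)$, $v^A=(v_l^A)$ related by $v_j^B=\sum_l T_{jl}\, v_l^A$, a central element $y=\sum_l \alpha_l\, p_l^A$ satisfies
\[
E_B(y) = \sum_j \beta_j\, p_j^B, \qquad \beta_j = \frac{\sum_l T_{jl}\, \alpha_l\, v_l^A}{v_j^B},
\]
which is again central in $B$. This short lemma follows from the $B$-bimodule property of $E_B$ (which forces $E_B(y)\in Z(B)$ when $y\in Z(A)$) together with a direct trace computation pinning down the coefficients. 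An immediate consequence is that powers and fractional powers of such conditional expectations can be computed coordinate-wise.

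Now I iterate. First, $E_{A_{10}}(x)$ is central in $A_{10}$ with coordinates $b_j := \tfrac{\sum_l (T_{10}^{11})_{jl}\, a_l\, v_l^{11}}{v_j^{10}}$, so $E_{A_{10}}(x)^p$ has coordinates $b_j^p$. Applying $E_{A_{01}}$ to this element of $A_{10}$ and invoking the commuting square identity $E_{A_{01}}\circ E_{A_{10}} = E_{A_{00}}$ yields a central element of $A_{00}$; taking its $1/p$-th power produces an element whose $i$-th coordinate is exactly the left-hand side of Theorem~\ref{commuting squareA}. Symmetrically, starting from $x^p = \sum_l a_l^p\, p_l^{11}$ and iterating along $A_{01}\subset A_{11}$ followed by $E_{A_{10}}\circ E_{A_{01}} = E_{A_{00}}$ recovers the right-hand side as the $i$-th coordinate of $E_{A_{10}}(E_{A_{01}}(x^p)^{1/p})$. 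Since both sides of the operator inequality in Theorem~\ref{NCMinA} are positive central elements of $A_{00}$ in this setting, the inequality reduces to the scalar inequality on each minimal central projection $p_i^{00}$, which is precisely the claim.

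The argument is essentially bookkeeping, so the main obstacle is careful tracking of the four different inclusion matrices and trace vectors --- specifically, verifying that the formula $v_j^B = \sum_l T_{jl}\, v_l^A$ is oriented correctly for each of the four edges of the square, so that the numerators and denominators of the final inequality align with those in the statement.
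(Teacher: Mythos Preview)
Your proposal is correct and follows essentially the same route as the paper: apply Theorem~\ref{NCMinA} to the central element $x=\sum_l a_l\,z_l^{11}$ (using the second alternative of condition~\eqref{variants}), then compute both sides of the operator inequality via the explicit formula for trace-preserving conditional expectations between multi-matrix algebras (the paper's Lemma~\ref{cond exp formula}) and compare the coefficients on each minimal central projection of $A_{00}$. The only cosmetic difference is that you invoke the commuting square identity $E_{A_{01}}|_{A_{10}}=E_{A_{00}}|_{A_{10}}$ to explain why the left-hand side lands in $A_{00}$, whereas the paper simply writes out the formula directly using the inclusion matrix $T_{00}^{10}$.
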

In the commuting square construction, this theorem can be used to exclude graphs that do not yield a commuting square, without checking the biunitary condition. In particular, endomorphism spaces of tensor products of objects in a spherical unitary tensor category form commuting squares, and any rigid unitary tensor category has a spherical pivotal structure \cite[Lemma 3.9]{MR2091457}. Therefore, Theorem \ref{commuting squareA} specializes to a unitary categorification criterion for based rings with given quantum dimension data:
\begin{coralpha}\label{fusion ringA}
    Let $\mathcal{C}$ be a spherical unitary (multi-)tensor category with $s$ isomorphism classes of simple objects where $s\in \mathbb{N}\cup\{\infty\}$. Suppose that the simple objects $c_i\in \Irr(\mathcal{C})$ satisfy the fusion rules $c_i\otimes c_j\cong \bigoplus_{l=1}^s N_{ij}^l c_l$ and let $d_i$ be the quantum dimension of $c_i$. Then for any triple of simple objects $c_i,c_j,c_k\in\Irr(\mathcal{C})$, $1\le p<\infty$, and $a_n\ge0$ for $1\le n\le s$, we have
    \begin{equation*}
        \left(\sum_{l=1}^{s}\left(\sum_{n=1}^{s} \frac{N_{lk}^n\ a_nd_n}{d_l}\right)^p \frac{N_{ij}^ld_l}{d_j}\right)^{\frac{1}{p}} \le \sum_{m=1}^{s}\left(\sum_{n=1}^{s}  \frac{N_{im}^n\ a_n^pd_n}{d_m}\right)^{\frac{1}{p}}\frac{N_{jk}^m d_m}{d_j}.
    \end{equation*}
\end{coralpha}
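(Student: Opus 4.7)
The plan is to deduce the inequality from Theorem \ref{commuting squareA} applied to a canonical commuting square built from the triple $c_i, c_j, c_k$. First I set $X := c_i\otimes c_j\otimes c_k$ and introduce four subalgebras of $A_{11} := \End(X)$:
\[A_{00} := \mathbb{C}\cdot\id_X,\qquad A_{10} := \End(c_i\otimes c_j)\otimes\id_{c_k},\qquad A_{01} := \id_{c_i}\otimes\End(c_j\otimes c_k).\]
Each is a finite-dimensional multi-matrix algebra because the relevant tensor products decompose into finitely many simples. Equipping $A_{11}$ with the categorical trace $\tr := \Tr_X$ coming from the spherical pivotal structure guaranteed by \cite[Lemma 3.9]{MR2091457} makes $\tr$ a faithful positive trace by unitarity.

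Next I verify that the resulting diagram is a commuting square. The $\tr$-preserving conditional expectations $E_{A_{10}}$ and $E_{A_{01}}$ are the normalized categorical partial traces (over $c_k$ and $c_i$ respectively), and sphericality ensures $E_{A_{10}}\circ E_{A_{01}} = E_{A_{01}}\circ E_{A_{10}} = E_{A_{00}} = \tr(\cdot)/d_X\cdot\id_X$, i.e., the commuting square condition holds. This identification is standard in the planar-algebra and $2$-category framework for unitary (multi-)tensor categories.

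Third, I read off the combinatorial data using the fusion rules and the multiplicativity of the spherical trace on tensor products:
\[(T_{00}^{10})_{1l} = N_{ij}^l,\quad (T_{00}^{01})_{1m} = N_{jk}^m,\quad (T_{10}^{11})_{ln} = N_{lk}^n,\quad (T_{01}^{11})_{mn} = N_{im}^n,\]
\[v_1^{00} = d_id_jd_k,\quad v_l^{10} = d_ld_k,\quad v_m^{01} = d_id_m,\quad v_n^{11} = d_n.\]
Substituting these into the inequality of Theorem \ref{commuting squareA} (where $n_{00}=1$ renders the outer summation trivial), a short algebraic manipulation shows that each side differs from the corresponding expression in Corollary \ref{fusion ringA} by the common positive factor $1/(d_k\, d_i^{1/p})$; cancelling it yields the asserted inequality.

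I expect the main obstacle to be the commuting square verification in the second step. Although folklore in subfactor theory and unitary tensor categories, in the multi-tensor setting one must be careful about the non-simplicity of $\mathbf{1}$ and the precise normalizations that make $\tr$ a trace compatible with the embeddings; graphical calculus provides the cleanest verification. Note that the case $s=\infty$ requires no approximation, since each $c_i\otimes c_j$ has only finitely many simple summands and hence all sums appearing in the inequality are effectively finite even for an infinite index set.
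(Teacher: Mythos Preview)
Your proposal is correct and follows essentially the same approach as the paper: the paper applies Theorem~\ref{commuting squareA} to the identical commuting square $\mathbb{C}\,\id\subset\End(c_i\otimes c_j)\otimes\id_{c_k},\ \id_{c_i}\otimes\End(c_j\otimes c_k)\subset\End(c_i\otimes c_j\otimes c_k)$, makes the same identifications of inclusion matrices with fusion coefficients, and then multiplies both sides by $d_i^{1/p}d_k$ (equivalently your division by $1/(d_k\,d_i^{1/p})$). The only cosmetic difference is that the paper uses the normalized categorical trace (so $v^{11}_n=d_n/(d_id_jd_k)$, etc.) whereas you use the unnormalized $\Tr_X$; since only ratios of trace-vector entries appear in \eqref{minkowski criterion cs ineq}, the two choices yield the same inequality.
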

We tested the above inequality on the aforementioned dataset of fusion rings from \cite{MR4644689}. Out of 28,450 fusion rings, 5,793 were excluded, corresponding to 20.36\%. Among them, 540 items, that is, 1.898\%, cannot be excluded by the primary $3$-criterion of \cite{MR4686667}. Moreover, $23$ of these cannot be excluded by the \textit{$d$-number}, \textit{extended cyclotomic}, \textit{Lagrange}, and \textit{pseudo-unitarity Drinfeld} criteria in \cite{MR4434141}. Some of these cannot even be excluded by the criterion in \cite[Remark 3.2]{ABDP25}. 

In Section \ref{Sec_NCMin}, we prove a generalized H\"older inequality and Theorem \ref{NCMinA}. In Section \ref{Sec_App}, we show Theorem \ref{commuting squareA} and deduce Corollary \ref{fusion ringA}. Then we apply Corollary \ref{fusion ringA} to exclude fusion rings without unitary categorification. 

\paragraph{Acknowledgments.} The author was supported by U.S. ARO Grant W911NF-23-1-0026. The author would like to thank Dietmar Bisch and Zhengwei Liu for fruitful discussions and comments, and S\'ebastien Palcoux for generously providing his fusion ring datasets and comments. The author also thanks Ricardo Correa da Silva and Hun Hee Lee for helpful discussions and materials related to noncommutative $L^p$-spaces.

\section{Noncommutative Minkowski's integral inequality}\label{Sec_NCMin}

Throughout, every von Neuman algebra is \textit{tracial}, i.e. it admits a faithful normal \textit{tracial state} $\tr$ (or simply, a \textit{trace}). The set of positive elements of a von Neumann algebra $\M$ is denoted by $\M_+$ and the center of $\M$ by $Z(\M)$. Every inclusion $\N\subset \M$ of von Neumann algebras is \textit{unital}, i.e. $1_\N=1_\M$. For an element $x\in \M$ its spectrum is denoted by $\sigma(x)$. 
For $1\le p\le \infty$, let $1\le p'\le \infty$ satisfies $\frac{1}{p}+\frac{1}{p'}=1$. For $1\le p<\infty$ the \textit{$p$-norm} $\|x\|_p$ of $x\in \M$ is defined by $\|x\|_p=\tr(|x|^{p})^{1/p}$ for $1\le p<\infty$. The operator norm of $x$ is denoted by $\|x\|=\|x\|_\infty$.

\subsection{Reduced H\"older inequality}
In this section, we generalize the following noncommutative H\"older inequality:
\begin{thm}[{\cite[Theorem 2.1.4]{NCLp}}]
    Let $x,y\in \M_+$ and $1\le p\le \infty$. Then 
    $$
    \tr(xy)\le \|x\|_p\|y\|_{p'}.
    $$
\end{thm}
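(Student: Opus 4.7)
The plan is to prove $\tr(xy) \le \|x\|_p\|y\|_{p'}$ for $x,y \in \M_+$ by combining the scalar Young inequality with the spectral calculus, closely paralleling the classical commutative argument.

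First I would dispose of the endpoint cases. For $p=\infty$ and $p'=1$, the operator inequality $y \le \|y\|_\infty \cdot 1_\M$ combined with the tracial identity $\tr(xy) = \tr(x^{1/2}yx^{1/2})$ immediately yields
\begin{equation*}
\tr(xy) \le \|y\|_\infty \tr(x) = \|x\|_1\|y\|_\infty,
\end{equation*}
and the case $p=1$ is symmetric.

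For the main range $1 < p < \infty$, I would reduce by homogeneity to $\|x\|_p = \|y\|_{p'} = 1$ and aim to show $\tr(xy) \le 1$. The central step is the operator Young inequality
\begin{equation*}
\tr(xy) \le \frac{\tr(x^p)}{p} + \frac{\tr(y^{p'})}{p'}, \qquad x,y \in \M_+,
\end{equation*}
which under the normalization delivers exactly $\tfrac{1}{p}+\tfrac{1}{p'}=1$ as required. To establish it, let $E$ and $F$ be the spectral measures of $x$ and $y$. The bimeasure
\begin{equation*}
\nu(A\times B) := \tr(E(A)F(B)) = \tr\bigl(F(B)^{1/2}E(A)F(B)^{1/2}\bigr) \ge 0
\end{equation*}
on $[0,\|x\|]\times[0,\|y\|]$ is positive, and extends to a Borel product measure via normality of $\tr$ together with strong countable additivity of the spectral projections. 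The spectral integral representation then gives $\tr(xy) = \iint \lambda\mu \, d\nu(\lambda,\mu)$, and the scalar Young inequality $\lambda\mu \le \lambda^p/p + \mu^{p'}/p'$ applied pointwise under the integral, combined with the identification of the marginals of $\nu$ with the spectral distributions of $x^p$ and $y^{p'}$, yields the operator Young bound.

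The main obstacle is the rigorous construction of the bimeasure $\nu$ and the Fubini-type interchange used to identify its marginals. In the matrix case $\nu$ reduces to a finite combination of point masses $\tr(P_iQ_j)$ for spectral projections, and the whole argument collapses to a short double sum; in the general tracial von Neumann setting it rests on standard machinery, namely the spectral theorem, normality and faithfulness of $\tr$, and the positivity $\tr(PQ) \ge 0$ for orthogonal projections, which are foundational in noncommutative $L^p$-theory.
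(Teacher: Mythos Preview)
The paper does not prove this statement; it is quoted from an external reference and then \emph{generalized} in Lemma~\ref{Holderlike} (the ``Reduced H\"older inequality'' with an auxiliary projection $q$). That generalization is proved by an entirely different method: one sets $f(w)=\tr(qx^{pw}qy^{p'(1-w)})$ on the strip $0\le\re w\le 1$, bounds $|f(u+iv)|\le f(u)$ via Cauchy--Schwarz and traciality, and applies Hadamard's three lines theorem. Your route via the tracial Young inequality is correct and genuinely different. The three lines argument buys the equality characterization essentially for free through the maximum modulus principle, which the paper needs; your approach would require a separate analysis of equality in Young's inequality.

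One comment on your argument itself. The extension of the bimeasure $\nu(A\times B)=\tr(E(A)F(B))$ to a Borel measure on the product is more delicate than ``normality plus strong countable additivity'': separate $\sigma$-additivity of a bimeasure does not automatically yield a product measure. You can sidestep this entirely. The double-sum computation you describe for matrices works verbatim for simple spectral approximants in the general tracial setting: if $x_n=\sum_i a_iP_i$ and $y_m=\sum_j b_jQ_j$ with spectral projections $P_i,Q_j$, then $\tr(P_iQ_j)\ge 0$, scalar Young gives
\[
\tr(x_ny_m)=\sum_{i,j}a_ib_j\,\tr(P_iQ_j)\le \frac{1}{p}\sum_i a_i^p\,\tr(P_i)+\frac{1}{p'}\sum_j b_j^{p'}\,\tr(Q_j)=\frac{\tr(x_n^p)}{p}+\frac{\tr(y_m^{p'})}{p'},
\]
and norm convergence $x_n\to x$, $y_m\to y$ (hence $x_n^p\to x^p$, $y_m^{p'}\to y^{p'}$) finishes the job without any measure extension. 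Incidentally, the same computation with $\tr(qP_iqQ_j)\ge 0$ in place of $\tr(P_iQ_j)$ recovers the paper's Lemma~\ref{Holderlike}, so your method is not actually weaker in scope.
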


\begin{defn}
    A positive element $x\in \M_+$ is said to have a \textit{spectral gap} if there is an $\epsilon>0$ such that $(0,\epsilon)\cap \sigma(x)=\varnothing$.
\end{defn}

Let $g:[0,\infty]\rightarrow\mathbb{R}$ be defined as 
$$
g(t)=\left\{\begin{array}{ll}
     \log t,&  t>0,\\
     0,& t=0
\end{array}\right..
$$
Then for any positive element $x\in \M$ with a spectral gap, $g|_{\sigma(x)}$ is continuous. For a complex number $w\in\mathbb{C}$, we define $x^w$ by
$$
x^w=\sum_{n=0}^\infty\frac{1}{n!}(wg(x))^n 
$$
with the convention $w^0=1$ for any $w\in \mathbb{C}$, and $g(x)^0=x^0$ is the support projection of $x$. 
\begin{rmk}
    \begin{enumerate}
        \item It is immediate from the definition that
        $$
        (x^w)^*=x^{\overline{w}}.
        $$
        \item The map $w\mapsto x^w$ has derivative $w\mapsto g(x)x^w$. Indeed, for $0<|h|<1$
        \begin{align*}
            &\left\|\frac{x^{w+h}-x^{w}}{h}-g(x)x^w\right\|\le \| x^w\|\left\|\frac{x^{h}-x^{0}}{h}-g(x)\right\|=\| x^w\|\left\|\sum_{n=2}^\infty\frac{h^{n-1}}{n!}g(x)^n\right\|\\
            &\le \| x^w\|\sum_{n=2}^\infty\frac{|h^{n-1}|}{n!}\|g(x)\|^n\le \| x^w\|\sum_{n=2}^\infty\frac{|h|}{n!}\|g(x)\|^n\le\| x^w\|\exp(\|g(x)\|)|h|\ \xrightarrow{h \to 0}\  0.
        \end{align*}
    \end{enumerate}
\end{rmk}

\begin{lem}[Reduced H\"older inequality]\label{Holderlike}
    Let $x,y\in \M_+$ and $q\in\M$ be a projection. For $1< p< \infty$
    \begin{equation}
        \tr(qxq y)\le \tr(qx^pq)^{1/p}\tr(qy^{p'}q)^{1/p'}.\label{Holderlikeineq}
    \end{equation}
    If $x$ and $y$ are nonzero and have a spectral gap, then equality holds if and only if $[q,x]=0=[q,y]$ and $(xq)^p=\lambda (yq)^{p'}$ for some $\lambda\in (0,\infty)$.
\end{lem}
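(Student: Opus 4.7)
My plan is to reduce everything to the corner algebra $q\M q$ and then combine the standard noncommutative H\"older inequality with the Araki--Lieb--Thirring inequality. First I would use $q^2=q$ and the trace property to rewrite
\[
\tr(qxqy)=\tr((qxq)(qyq)),
\]
so that the computation takes place entirely in the tracial corner $q\M q$ (equipped with the restricted trace).

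Second, I would apply the standard noncommutative H\"older inequality to the positive elements $qxq,qyq\in q\M q$, yielding
\[
\tr((qxq)(qyq))\le \tr((qxq)^p)^{1/p}\,\tr((qyq)^{p'})^{1/p'}.
\]
To convert powers of corners into corners of powers, I would then invoke the Araki--Lieb--Thirring inequality: for $a\in\M_+$, $r\ge 1$, and a projection $q$,
\[
\tr((qaq)^r)\le \tr(qa^rq).
\]
Applying this twice, with $(a,r)=(x,p)$ and $(a,r)=(y,p')$, produces the right-hand side of \eqref{Holderlikeineq}.

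For the equality case, I would trace equality back through each step under the spectral gap hypothesis. Equality in the H\"older step is equivalent to $(qxq)^p=\lambda(qyq)^{p'}$ for some scalar, with the hypothesis that $x, y$ are nonzero forcing $\lambda\in(0,\infty)$. Each Araki--Lieb--Thirring equality forces the corresponding operator to commute with $q$, i.e.\ $[q,x]=0$ and $[q,y]=0$. Once these commutations are available, $qxq=xq$ and $qyq=yq$, and $(qxq)^p=(xq)^p$, so the H\"older equality condition collapses to the stated $(xq)^p=\lambda(yq)^{p'}$. The converse implication is a direct computation: when the commutations hold, every step is an identity and the Hölder equality reduces to the scalar one in the commutative subalgebra generated by $q$, $x$, $y$.

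The hard part will be the equality case of the Araki--Lieb--Thirring step, which is considerably more delicate than the inequality itself; one must pass from a trace identity to a genuine commutation, and this is precisely where the spectral gap is used (so that the log-majorization producing ALT can be upgraded to a strict statement on spectral projections). An alternative, which fits the analytic setup introduced just before the lemma (where $x^w$ is defined for complex $w$ on elements with spectral gap), is to prove both the inequality and pin down its equality case simultaneously by applying Hadamard's three-lines theorem to the analytic function $F(z)=\tr(qx^{pz}qy^{p'(1-z)})$ on the strip $0\le \re z\le 1$: the boundary bounds $|F(it)|\le \tr(qy^{p'}q)$ and $|F(1+it)|\le \tr(qx^pq)$ yield \eqref{Holderlikeineq} at $z=1/p$, and the strict convexity of $\log|F|$ in the three-lines theorem supplies the characterization of equality.
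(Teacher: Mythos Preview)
Your primary route for the inequality is correct and distinctly more economical than the paper's: the identity $\tr(qxqy)=\tr((qxq)(qyq))$, the ordinary noncommutative H\"older inequality in the corner $q\M q$, and the Araki--Lieb--Thirring bound $\tr((qaq)^r)\le\tr(qa^rq)$ (valid in tracial von Neumann algebras for $r\ge 1$) combine to give \eqref{Holderlikeineq} in three lines. The paper does \emph{not} do this; it runs the three-lines argument directly on $f(w)=\tr(qx^{pw}qy^{p'(1-w)})$---precisely your ``alternative''---and uses that single device for both the inequality and the equality analysis.

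Where the two approaches diverge substantively is the equality case. You correctly flag the Araki--Lieb--Thirring equality step (``$\tr((qaq)^r)=\tr(qa^rq)\Rightarrow[q,a]=0$'') as the hard part, but the sketch via log-majorization is not a proof, and in the general tracial setting this characterization is not something one can simply cite. (For $r=2$ it is immediate from $qa^2q-(qaq)^2=qa(1-q)aq\ge 0$ and faithfulness; for other $r>1$ it needs real work, and note that only one of $p,p'$ is guaranteed to lie in $(1,2]$.) The paper sidesteps this entirely: once the three-lines bound is saturated, the maximum modulus principle forces $f(w)=f(1)^wf(0)^{1-w}$ on the whole strip, and then the Cauchy--Schwarz equality at $\re w=1$ yields $qx^{piv}=C_v\,y^{ip'v}q$ for all $v$, from which the commutations $[q,x]=[q,y]=0$ and the relation $(xq)^p=\lambda(yq)^{p'}$ drop out via Stone--Weierstrass. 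In short, your primary route buys a cleaner proof of the inequality at the cost of an equality analysis that is genuinely incomplete as written; the paper's route (your alternative) is longer but handles both halves with the same complex-analytic machinery and no appeal to an ALT equality case.
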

\begin{proof}
    We first consider the case when $x$ and $y$ have a spectral gap.
    Let $\Omega=\{w\in\mathbb{C}: 0\le \re(w)\le 1\}$,  and let $f:\Omega\rightarrow\mathbb{C}$ be defined by
    $$
    f(w)=\tr(qx^{pw}qy^{p'(1-w)}).
    $$
    Then $f$ is holomorphic on the interior of $\Omega$ by differentiability of $x^{pw}$ and $y^{p'(1-w)}$ and norm-continuity of $\tr$. Note that 
    \begin{align}
        &|f(u+iv)|=\left|\tr\left(\left[y^{p'\frac{1-u}{2}}qx^{p\left(\frac{u}{2}+iv\right)}\right]\left[x^{p\frac{u}{2}}qy^{p'\left(\frac{1-u}{2}-iv\right)}\right]\right)\right|\nonumber\\
        &\le \tr\left(y^{p'\frac{1-u}{2}}qx^{p\left(\frac{u}{2}+iv\right)}x^{p\left(\frac{u}{2}-iv\right)}qy^{p'\frac{1-u}{2}}\right)^{1/2}
        \tr\left(y^{p'\left(\frac{1-u}{2}+iv\right)}qx^{p\frac{u}{2}}x^{p\frac{u}{2}}qy^{p'\left(\frac{1-u}{2}-iv\right)}\right)^{1/2}\label{cs}\\
        &=\tr(qx^{pu}qy^{p'(1-u)})^{1/2}
        \tr(qx^{pu}qy^{p'(1-u)})^{1/2}=\tr(qx^{pu}qy^{p'(1-u)})=f(u).\label{traciality}
    \end{align}
    where we used the Cauchy-Schwarz inequality in \eqref{cs} and traciality in \eqref{traciality}. This implies that $f(w)$ is bounded. Moreover, from the positivity of the trace $\tr$, we obtain
    \begin{align}
        |f(1+iv)|\le& \tr(qx^{p}qy^{0}) \le \tr(qx^{p}q),&
        |f(iv)|\le& \tr(qx^{0}qy^{p'}) \le \tr(qy^{p'}q).\label{ineq3}
    \end{align}
    By the three lines theorem,
    \begin{align}
        |\tr(qx^{pw}qy^{p'(1-w)})|=|f(w)|\le \left|\left(\sup_{v\in\mathbb{R}}|f(1+iv)|\right)^{w}\left(\sup_{v\in\mathbb{R}}|f(iv)|\right)^{1-w}\right|=\left|f(1)^wf(0)^{1-w}\right|\label{three lines0}
    \end{align}
    where we used \eqref{traciality} in the last identity.
    In particular,
    \begin{align}
        \tr(qxqy)=f(1/p)\le f(1)^{1/p}f(0)^{1/p'}.\label{three lines}
    \end{align}
    Combining \eqref{ineq3} and \eqref{three lines} yields \eqref{Holderlikeineq}.

    Next, we consider the general case where $x$ and $y$ do not necessarily have a spectral gap. Let $x_n=x\mathbbm{1}_{\left[\frac{1}{n},\|x\|\right]}(x)$ and $y_m=y\mathbbm{1}_{\left[\frac{1}{m},\|y\|\right]}(y)$. Then $x_n\xrightarrow{n\to\infty} x$ and $y_m\xrightarrow{m\to\infty}y$ in the operator norm. Thus, 
    \begin{align*}
        \tr(qxq y)=\lim_{\substack{n\to\infty\\ m\to \infty}}\tr(qx_nq y_m)\le \lim_{n\to\infty}\tr(qx_n^pq)^{1/p}\lim_{m\to\infty}\tr(qy_m^{p'}q)^{1/p'}= \tr(qx^pq)^{1/p}\tr(qy^{p'}q)^{1/p'}.
    \end{align*}

    Now, we check when equality occurs. Suppose that $x$ and $y$ are nonzero positive elements with spectral gap, and that the equality in \eqref{Holderlikeineq} holds. First, equality must hold in the second inequality in each part of \eqref{ineq3}. Thus, we have 
    \begin{align*}
    y^0qx^{p/2}&=qx^{p/2},& y^{p'/2}qx^0&=y^{p'/2}q.
    \end{align*}
    Multiplying $x^{-p/2}$ to the right in the first identity and $y^{-p'/2}$ to the left in the second identity gives
    \begin{equation}
        y^0qx^0=qx^0=y^{0}q.\label{supp}
    \end{equation} 
    
    The equality in \eqref{three lines} must also hold. From \eqref{three lines0} it follows that the maximum modulus $1$ of $f(w)\left[f(1)^{w}f(0)^{1-w}\right]^{-1}$ is attained at the interior point $1/p$ of $\Omega$. Thus, by the maximum modulus principle,
    $$
    f(w)=Cf(1)^{w}f(0)^{1-w}
    $$ 
    for some constant $C$. Evaluation at $w=0$ yields $C=1$, and thus 
    $$
    f(w)=f(1)^{w}f(0)^{1-w}.
    $$
    This implies that 
    $$
    |f(1+iv)|=|f(1)^{1+iv}f(0)^{-iv}|=|f(1)|=\tr(qx^pqy^0).
    $$
    Therefore, equality in \eqref{cs} holds for $u=1$ and for all $v\in\mathbb{R}$.    
    By the equality condition of the Cauchy-Schwarz inequality, we have 
    $y^0qx^{p(\frac{1}{2}+iv)}= C_v y^{ip'v}qx^{p/2}$ for some constant $C_v$ depending on $v$, or equivalently by \eqref{supp},
    \begin{equation}
        qx^{piv}= C_v y^{ip'v}q.\label{xy comm}
    \end{equation}
    Therefore,
    \begin{align*}
        C_vf(1)=\tr((C_vy^0q)x^pq)=\tr(y^{-ip'v}qx^{p(1+iv)}q)=f(1+iv)=f(1)^{1+iv}f(0)^{-iv}.
    \end{align*}
    Hence, we have $C_v=f(1)^{iv}f(0)^{-iv}$. Using \eqref{xy comm}, substituting $\widetilde{x}=\frac{x^p}{f(1)}$ and $\widetilde{y}=\frac{y^{p'}}{f(0)}$, and applying the Stone-Weierstrass theorem gives $q\widetilde{x}=\widetilde{y}q$.
    From this, we obtain $q\widetilde{x}=q^2\widetilde{x}=q\widetilde{y}q=\widetilde{x}q^2=\widetilde{x}q$.
    By continuous functional calculus, we have $[x,q]=0$. A similar argument gives $[y,q]$. Therefore, \eqref{Holderlikeineq} is equivalent to
    $$
    \tr((qx)(qy))\le \tr((qx)^p)^{1/p}\tr((qy)^{p'})^{1/p'}
    $$
    which is the noncommutative H\"older inequality. Therefore, equality holds only if $[q,x]=0=[q,y]$ and $(qx)^p=\lambda (qy)^{p'}$ for some $\lambda\in (0,\infty)$. One can easily verify that equality in \eqref{Holderlikeineq} holds for any such $x$ and $y$.
\end{proof}

\subsection{Noncommutative Minkowski's integral inequality}

\begin{prop}\label{positive}
    Let $(\M,\tr)$ be a tracial von Neumann algebra. Then $x\in \M$ is positive if and only if $\tr(rx)\ge0$ for every projection $r\in \M$.
\end{prop}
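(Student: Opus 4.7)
The forward direction is immediate from traciality and positivity of $\tr$: for $x\in\M_+$ I would factor $x=x^{1/2}x^{1/2}$ and rewrite
\[
\tr(rx)=\tr\bigl(x^{1/2}rx^{1/2}\bigr)\geq 0,
\]
since $x^{1/2}rx^{1/2}$ is positive.

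For the converse, the plan is to first upgrade the hypothesis to self-adjointness of $x$, and then pass to positivity through the spectral theorem. To obtain $x=x^*$, I would observe that the hypothesis makes $\tr(rx)$ real, while traciality gives $\tr(rx^*)=\overline{\tr(xr)}=\overline{\tr(rx)}=\tr(rx)$, so $\tr\bigl(r(x-x^*)\bigr)=0$ for every projection $r\in\M$. Using that finite linear combinations of projections are norm-dense in $\M$ (apply the spectral theorem to the real and imaginary parts of an arbitrary element), this extends by linearity and continuity of $\tr$ to $\tr\bigl(y(x-x^*)\bigr)=0$ for every $y\in\M$. Plugging in $y=(x-x^*)^*$ and invoking faithfulness yields $x-x^*=0$.

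Once self-adjointness is in hand, I would test the hypothesis against the spectral projection $r_\epsilon=\mathbbm{1}_{(-\infty,-\epsilon)}(x)$ for each $\epsilon>0$. Since $r_\epsilon$ commutes with $x$ and $xr_\epsilon\le -\epsilon\, r_\epsilon$, one has $\tr(r_\epsilon x)\le -\epsilon\,\tr(r_\epsilon)\le 0$, which combined with $\tr(r_\epsilon x)\ge 0$ forces $\tr(r_\epsilon)=0$; faithfulness of $\tr$ then gives $r_\epsilon=0$. As $\epsilon>0$ is arbitrary, $\sigma(x)\subset[0,\infty)$, and hence $x\in\M_+$.

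I do not expect a serious obstacle here; the only step that deserves mention is the norm-density of the span of projections in $\M$, which is standard via approximating bounded self-adjoint elements by simple functions of their spectral measures.
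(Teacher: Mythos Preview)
Your proof is correct. The positivity step is essentially identical to the paper's (the paper uses $\mathbbm{1}_{(-\infty,0)}(x)$ directly and applies faithfulness to the positive element $-\mathbbm{1}_{(-\infty,0)}(x)x$, whereas you use $\mathbbm{1}_{(-\infty,-\epsilon)}(x)$ and apply faithfulness to the projection itself; these are the same idea). The self-adjointness step, however, is handled differently. You pass through the norm-density of the span of projections to upgrade $\tr(r(x-x^*))=0$ to $\tr(y(x-x^*))=0$ for all $y\in\M$, and then use faithfulness. The paper instead writes $x=x_1+ix_2$ with $x_1,x_2$ self-adjoint and tests the hypothesis only against the spectral projections $\mathbbm{1}_{(0,\infty)}(x_2)$ and $\mathbbm{1}_{(-\infty,0)}(x_2)$: since $\tr(\mathbbm{1}_{(0,\infty)}(x_2)x)\ge 0$ is real, its imaginary part $\tr(\mathbbm{1}_{(0,\infty)}(x_2)x_2)$ vanishes, and faithfulness on the positive element $\mathbbm{1}_{(0,\infty)}(x_2)x_2$ kills the positive part of $x_2$ (similarly for the negative part). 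The paper's route is a bit more economical---it never leaves the world of explicitly chosen spectral projections and avoids the density lemma---while yours is perhaps more conceptually transparent, reducing both steps to ``$\tr(y\cdot)=0$ for enough $y$ forces vanishing.'' Both are standard and neither has a real advantage over the other.
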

The author thanks Bisch for providing the following short proof.
\begin{proof}
    The only if part is trivial. For the if part, we decompose $x$ as $x=x_1+ix_2$ where both $x_1$ and $x_2$ are self-adjoint. We have $0\le  \tr(\mathbbm{1}_{(0,\infty)}(x_2)x_2)= \im\tr(\mathbbm{1}_{(0,\infty)}(x_2)x)=0$. Thus, $\mathbbm{1}_{(0,\infty)}(x_2)x_2=0$ by faithfulness of $\tr$. A similar argument shows $\mathbbm{1}_{(-\infty,0)}(x_2)x_2=0$. Therefore, $x_2=0$ and $x=x^*$. For a self-adjoint $x$, $\mathbbm{1}_{(-\infty,0)}(x)x\le 0$ and thus, $\tr(\mathbbm{1}_{(-\infty,0)}(x)x)\le 0$. Therefore, $\tr(\mathbbm{1}_{(-\infty,0)}(x)x)=0$ by the assumption, and $\mathbbm{1}_{(-\infty,0)}(x)x=0$ by faithfulness. Therefore, $x\ge 0$.     
\end{proof}

\begin{defn}
    Let $\N\subset \M$ be a unital inclusion of von Neumann algebras. A linear map $E:\M\rightarrow \N$ is called a \textit{conditional expectation} onto $\N$ if
    \begin{enumerate}
        \item $E(\M_+)=\N_+$;
        \item $E(x)=x$ for all $x\in \N$;
        \item $E(y_1xy_2)=y_1E(x)y_2$ for all $x\in \M$ and $y_1,y_2\in \N$.
    \end{enumerate}
\end{defn}
Let $(\M,\tr)$ be a tracial von Neumann algebra, $\N\subset \M$ be a unital subalgebra. Recall that there is a unique $\tr$-preserving conditional expectation $E:\M\rightarrow \N$ and that $\|E(x)\|\le \|x\|$ for all $x\in \M$ \cite[Section 9.1]{II1}.
\textit{Throughout, every conditional expectation is assumed to be trace preserving.}
\begin{defn}[\cite{MR999799}]
    Let $M$ be a tracial von Neumann algebra, $\N\subset \K,\Ll\subset \M$ be subalgebras, $E_{\mathcal{A}}:\M\rightarrow \mathcal{A}$ be the $\tr$-preserving conditional expectation onto a subalgebra $\mathcal{A}$. Then  
    \begin{equation*}
    \begin{tikzcd}
        \Ll\arrow[r,symbol=\subset,"E_\Ll" {yshift =3pt}] & \M\\
        \N \arrow[r,symbol=\subset,"E_\N"' {yshift=-3pt}] \arrow[u,symbol=\subset,"E_\N"{xshift=-3pt}] & \K\arrow[u,symbol=\subset,"E_\K"'{xshift=3pt}]
    \end{tikzcd}
    \end{equation*}
    is called a \textit{commuting square} if $E_\Ll E_\K=E_\N$, or equivalently, $E_\K E_\Ll =E_\N$.
\end{defn}

\begin{lem}\label{cond exp center}
    Let $\N\subset \M$ be an inclusion of von Neumann algebras and $E:\M\rightarrow \N$ be a conditional expectation. Then $E(Z(\M))\subseteq Z(\N)$.
\end{lem}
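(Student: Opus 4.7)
The plan is to exploit the bimodule property of the conditional expectation, namely $E(y_1 x y_2) = y_1 E(x) y_2$ for $x \in \M$ and $y_1, y_2 \in \N$. This is the only nontrivial ingredient needed, together with the defining property of the center.

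Given $z \in Z(\M)$, I want to show $E(z) \in Z(\N)$, so I pick an arbitrary $y \in \N$ and check that $y E(z) = E(z) y$. First I would use the bimodule property with $y_1 = y$, $y_2 = 1$, $x = z$ to write $y E(z) = E(yz)$. Since $y \in \N \subseteq \M$ and $z \in Z(\M)$, the elements $y$ and $z$ commute in $\M$, so $E(yz) = E(zy)$. Then applying the bimodule property again with $y_1 = 1$, $y_2 = y$ gives $E(zy) = E(z) y$. Chaining these identities yields $y E(z) = E(z) y$, and since $y \in \N$ was arbitrary, $E(z) \in Z(\N)$.

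The only step worth any care is the implicit verification that $E(z) \in \N$, which is part of the definition of a conditional expectation onto $\N$; no maximum-modulus or approximation arguments are required here. Thus this lemma is essentially a one-line consequence of the $\N$-bimodularity of $E$ and requires no use of traciality or of the specific form of $E$ as the trace-preserving conditional expectation.
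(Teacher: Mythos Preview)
Your argument is correct: the $\N$-bimodularity of $E$ together with $z$ commuting with every $y\in\N\subset\M$ immediately gives $yE(z)=E(yz)=E(zy)=E(z)y$. The paper states this lemma without proof, and your one-line verification is exactly the standard reason it holds.
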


\begin{prop}[{\cite[Theorem 2.1.6.(i)]{NCLp}}]\label{dual norm}
    For every $x\in \M_+$ and $1<p<\infty$ there is $y\in \M_+$ such that $\|y\|_{p'}=1$ and $\|x\|_{p}=\tr(xy)$.
\end{prop}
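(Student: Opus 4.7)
The plan is to exhibit an explicit optimizer. The scalar case of the Hölder equality, together with the conjugate-exponent identity $(p-1)p' = p$, suggests the candidate $y = x^{p-1}/\|x^{p-1}\|_{p'}$. First I would dispose of the trivial case $x = 0$, where $\|x\|_p = 0$ and one may take $y = 1 \in \M_+$: this element has $\|1\|_{p'} = \tr(1)^{1/p'} = 1$ and $\tr(xy) = 0 = \|x\|_p$. Henceforth I assume $x \neq 0$, in which case $\tr(x^p) > 0$ by faithfulness of $\tr$.

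Next I would use continuous functional calculus on $\sigma(x) \subset [0,\infty)$ to define $x^{p-1} \in \M_+$, so that $y$ is well-defined and positive. The verification of the two required conditions then reduces to the identity $(p-1)p' = p$. Since $x^{p-1}$ is positive, $|x^{p-1}|^{p'} = x^{(p-1)p'} = x^p$, giving
$$\|x^{p-1}\|_{p'} = \tr(x^p)^{1/p'} = \|x\|_p^{p-1}.$$
This immediately yields $\|y\|_{p'} = 1$, and the trace pairing collapses as
$$\tr(xy) = \frac{\tr(x \cdot x^{p-1})}{\|x\|_p^{p-1}} = \frac{\tr(x^p)}{\|x\|_p^{p-1}} = \|x\|_p.$$

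No serious obstacle arises: the argument is a direct construction via functional calculus combined with an elementary identity for conjugate exponents, and it realizes the standard duality between $L^p(\M,\tr)$ and $L^{p'}(\M,\tr)$ in the tracial setting. The construction is in fact the equality case of the noncommutative Hölder inequality, which is the dual perspective the later lemmas will exploit.
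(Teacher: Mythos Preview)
Your proof is correct. The paper does not supply its own proof of this proposition; it simply cites it from an external reference on noncommutative $L^p$-spaces. Your construction $y = x^{p-1}/\|x\|_p^{p-1}$ (with $y=1$ in the degenerate case $x=0$) is the standard explicit optimizer realizing the equality case of the noncommutative H\"older inequality, and your verification via the conjugate-exponent identity $(p-1)p' = p$ is clean and complete.
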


\begin{thm}[Noncommutative Minkowski integral inequality]\label{minkowski}
    Let $$
    \begin{tikzcd}
        \Ll\arrow[r,symbol=\subset, "E_\Ll" {yshift=3pt}] & \M\\
        \N \arrow[r,symbol=\subset,"E_\N"'{yshift=-3pt}] \arrow[u,symbol=\subset,"E_\N"{xshift=-3pt}] & \K\arrow[u,symbol=\subset, "E_\K"' {xshift=3pt}]
    \end{tikzcd}
    $$
    be a commuting square of tracial von Neumann algebras where $E_\Ll$, $E_\K$, and $E_\N$ are the unique $\tr$-preserving conditional expectations.
    If either 
    \begin{enumerate}[(a)]
        \item $\K$ and $\Ll$ commute, i.e. $y_1y_2=y_2y_1$ for all $y_1\in \K$ and $y_2\in \Ll$, or \label{a}
        \item $x\in Z(\M)$ \label{b}
    \end{enumerate}
    then for $1\le p<\infty$ and $x\in \M_+$ we have
    \begin{equation}\label{minkowski ineq}
        E_\Ll(E_{\K}(x)^p)^{1/p}\le E_{\K}(E_\Ll(x^p)^{1/p}),
    \end{equation}
\end{thm}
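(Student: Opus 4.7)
The $p=1$ case follows from the commuting square identity $E_\Ll E_\K=E_\N=E_\K E_\Ll$, which forces both sides to equal $E_\N(x)$. For $1<p<\infty$, this same identity shows that $E_\Ll(E_\K(x)^p)=E_\N(E_\K(x)^p)$ and $E_\K(E_\Ll(x^p)^{1/p})=E_\N(E_\Ll(x^p)^{1/p})$, so both sides of \eqref{minkowski ineq} lie in $\N$. By Proposition \ref{positive} applied in $\N$, the operator inequality reduces to showing
\begin{equation*}
  \tr\bigl(r E_\Ll(E_\K(x)^p)^{1/p}\bigr) \;\le\; \tr\bigl(r E_\Ll(x^p)^{1/p}\bigr)
\end{equation*}
for every projection $r\in\N$; the RHS has already been simplified using $r\in\N\subset\K$, the $\tr$-invariance of $E_\K$, and traciality.

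The plan is to mimic the classical Minkowski integral proof adapted to the commuting-square setting. I would spectrally decompose $E_\Ll(x^p)=\int\lambda\,de_\lambda\in\Ll_+$, form the projections $q_{n,i}=e_{((i-1)\|E_\Ll(x^p)\|/n,\,i\|E_\Ll(x^p)\|/n]}\in\Ll$, and use the operator-norm convergence $E_\Ll(x^p)^{1/p}=\lim_n\sum_i(i\|E_\Ll(x^p)\|/n)^{1/p}q_{n,i}$ together with the spectral estimate $\tr(rq_{n,i}x^p)=\tr(rq_{n,i}E_\Ll(x^p))\le(i/n)\|E_\Ll(x^p)\|\,\tr(rq_{n,i})$. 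For the LHS, Proposition \ref{dual norm} applied to a suitable $r$-weighted compression of $E_\K(x)$ in $\K$ produces $y\in\K_+$ with $\|y\|_{p'}=1$ for which the LHS is converted into the trace of a product involving $x$ and $y$. Splitting this trace against the partition $\{q_{n,i}\}$, using the commutation of $q_{n,i}\in\Ll$ with $y\in\K$, applying the reduced H\"older inequality (Lemma \ref{Holderlike}) with projection $rq_{n,i}$, and combining the spectral bound on the $x^p$-factor with the identity $\tr(zw)=\tr(zE_\N(w))$ for $z\in\Ll$ and $w\in\K$ (which follows from $E_\Ll|_\K=E_\N|_\K$ and traciality) for the $y^{p'}$-factor yields the per-term bound. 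Summing and letting $n\to\infty$ recovers the inequality.

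The main obstacle is the invocation of Proposition \ref{dual norm} above. In the $\N=\mathbb{C}1$ setting the dual element $y$ satisfies $\tr(y^{p'})=1$, and the reduced H\"older trace $\tr(q_{n,i}y^{p'})$ collapses to $\tr(q_{n,i})$ via the multiplicativity $\tr(zw)=\tr(z)\tr(w)$. For nontrivial $\N$ only the weaker conditional identity $\tr(zw)=\tr(zE_\N(w))$ is available, so the element $y$ must be chosen so that $E_\N(y^{p'})$ interacts correctly with $r$ and with the spectral projections $q_{n,i}$, and the whole argument must respect the commutation of $r$ with the operators of $\K$ and $\Ll$ that appear along the way. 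The two hypotheses (a) (that $\K$ and $\Ll$ commute, whence $\N\subset Z(\K)\cap Z(\Ll)$) and (b) (that $x\in Z(\M)$, whence $E_\K(x)$ and $E_\Ll(x)$ are central by Lemma \ref{cond exp center}) are precisely the two alternative guarantees that make these commutations valid.
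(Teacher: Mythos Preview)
Your $p=1$ case and the reduction of the operator inequality to a family of trace inequalities via Proposition~\ref{positive} are correct; the observation that both sides lie in $\N$ (because $E_\Ll|_\K=E_\N|_\K$ and $E_\K|_\Ll=E_\N|_\Ll$) is exactly what is needed. The paper makes the slightly sharper reduction to projections $r\in Z(\N)$, which is legitimate since in Case~(a) $\N$ is abelian, while in Case~(b) two applications of Lemma~\ref{cond exp center} place both sides in $Z(\N)$.

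The genuine gap is precisely the one you flag in your last paragraph, and you do not close it. Your route through Proposition~\ref{dual norm} is the argument for the special case $\N=\mathbb{C}1$: there the left side is literally $\|E_\K(x)\|_p$, a dual element $y\in\K_+$ with $\|y\|_{p'}=1$ exists, and the factor $\tr(q_{n,i}y^{p'})$ collapses to $\tr(q_{n,i})$ via $\tr(ab)=\tr(a)\tr(b)$ for $a\in\Ll$, $b\in\K$. For general $\N$ the left side is $\tr\bigl(r\,z^{1/p}\bigr)$ with $z=E_\Ll(E_\K(x)^p)\in\N$, which is not an $L^p$-norm, and no choice of abstract dual $y$ gives simultaneous control of $\tr(r q_{n,i} y^{p'})$ together with a correct linearization of the $1/p$-th power.

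The paper abandons duality. It introduces a \emph{second} spectral decomposition, of $r\,E_\Ll(E_\K(x)^p)\,r$, via geometrically spaced projections $r_{t,j}=\mathbbm{1}_{(t^j,t^{j-1}]}\bigl(rE_\Ll(E_\K(x)^p)r\bigr)$ for $0<t<1$. On the range of $r_{t,j}$ one has $z^{1/p}=z^{-(p-1)/p}\cdot z\le t^{-(p-1)j/p}\,z$, which trades the root for a factor that becomes harmless as $t\to 1^-$. This also fixes the H\"older partner: instead of an abstract $y$ one takes $y=E_\K(x)^{p-1}$ explicitly, writes $\tr(r_{t,j}E_\K(x)^p)=\tr(r_{t,j}xE_\K(x)^{p-1})$, splits over the $q_{n,i}$, and applies Lemma~\ref{Holderlike} with projection $r_{t,j}q_{n,i}$. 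The resulting factor $\tr(r_{t,j}q_{n,i}E_\K(x)^p)^{(p-1)/p}=\tr(r_{t,j}q_{n,i}E_\Ll(E_\K(x)^p))^{(p-1)/p}$ is then bounded by the spectral estimate for $r_{t,j}$, which is exactly the control you could not manufacture for an unknown $y^{p'}$. Sending $n\to\infty$ and then $t\to1^-$ finishes the proof. So the missing idea is to replace Proposition~\ref{dual norm} by a geometric spectral partition of $E_\Ll(E_\K(x)^p)$ and to choose $y=E_\K(x)$ in the reduced H\"older step.
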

\begin{proof}
    Equality holds in the case $p=1$, and this is exactly the commuting square condition. We consider the case $1<p<\infty$. In Case \eqref{a}, $\N=\K\, \cap\, \Ll$ is commutative, and in Case \eqref{b}, $E_\Ll(E_{\K}(x)^p)^{1/p}, E_{\K}(E_\Ll(x^p)^{1/p})\in Z(\N)$ by Lemma \ref{cond exp center}.
    Therefore, \eqref{minkowski ineq} is equivalent to
    $$
    \tr(r E_\Ll(E_{\K}(x)^p)^{1/p})\le \tr( r E_\K(E_{\Ll}(x^p)^{1/p}))
    $$
    for all projections $r\in Z(\N)$ by Proposition \ref{positive}.
    Hence, we need to show the above trace inequality.
    
    Suppose that $\Ll$ and $\K$ commute (resp. $x\in Z(\M)$). Without loss of generality, we may assume $\|x\|=\|x\|_\infty=1$. This implies $\|E_\Ll(E_\K(x)^p)\|$, $\|E_\Ll(x^p)\|\le 1$.  
    For $n,i,j\in \mathbb{N}$ satisfying $1\le i\le n$ and $0<t<1$, define $q_{n,i}$ and $r_{t,j}$ by
    $$
    q_{n,i}=\mathbbm{1}_{(\frac{i-1}{n},\frac{i}{n}]}\left(E_\Ll(x^p)\right)\ \ \ \text{ and } r_{t,j}= \mathbbm{1}_{(t^{j},t^{j-1}]}\left(rE_\Ll(E_\K(x)^p)r\right).
    $$
    Note that $[q_{n,i},r_{t,j}]=0$, since $[\K,\Ll]=0$ (resp. by Lemma \ref{cond exp center}).
    By the spectral theorem, we have
    $$
    E_\Ll(x^p)^{1/p}=\lim_{n\to \infty}\sum_{i=1}^n \left(\frac{i}{n}\right)^{1/p}q_{n,i}\ \ \ \text{ and }\ \ \ t^{j} r_{t,j}\le r_{t,j}E_\Ll(E_\K(x)^p)r_{t,j}\le t^{j-1} r_{t,j}.
    $$
    The above limit converges in operator norm.
    Moreover, we have 
    \begin{align}
        &\tr(rE_\Ll(E_\K(x)^p)^{1/p})=\tr\left( \sum_{j=1}^\infty r_{t,j} E_\Ll(E_\K(x)^p)^{1/p}\right) \label{infty sum}\\
        &=\sum_{j=1}^\infty\tr( r_{t,j} E_\Ll(E_\K(x)^p)^{1/p})\le \sum_{j=1}^\infty t^{-\frac{p-1}{p}j}\tr( r_{t,j} E_\Ll(E_\K(x)^p)). \label{ineq0}
    \end{align}
    The infinite sum in \eqref{infty sum} converges in norm. Thus, by the continuity of $\tr$, summation and taking the trace commute. 
    
    Now, we claim that $t^{-\frac{p-1}{p}j}\tr( r_{t,j} E_\Ll(E_\K(x)^p))\le t^{-\frac{p-1}{p}}
    \tr(r_{t,j}E_\Ll(x^p)^{1/p})$ for all $j$. We first bound $\tr(r_{t,j}E_\Ll(E_\K(x)^p))$. Since $r_{t,j}\in \N=\K\cap \Ll$ and $\sum_{i=1}^n q_{n,i}=1$,
    \begin{align}
        &\tr(r_{t,j}E_\Ll(E_\K(x)^p))=\tr(E_\Ll(r_{t,j}E_\K(x)^p))=\tr(r_{t,j}E_\K(x)E_\K(x)^{p-1})\nonumber\\
        &=\tr(E_\K(r_{t,j}xE_\K(x)^{p-1}))=\tr(r_{t,j}xE_\K(x)^{p-1})=\sum_{i=1}^n\tr(r_{t,j}q_{n,i}xE_\K(x)^{p-1})\nonumber\\
        &=\sum_{i=1}^n\tr((r_{t,j}q_{n,i}) x(r_{t,j}q_{n,i})E_\K(x)^{p-1}). \label{ineq1}
    \end{align}
    In the last identity, we used traciality and $[E_\K(x)^{p-1},q_{n,i}]=[E_\K(x)^{p-1},r_{t,j}]=[q_{n,i},r_{t,j}]=0$ (resp. $x\in Z(\M)$ and $q_{n,i}\in Z(\Ll)$).
    Now we have
    \begin{align*}
        &\tr((r_{t,j}q_{n,i}) x(r_{t,j}q_{n,i})E_\K(x)^{p-1})\le \tr(r_{t,j}q_{n,i}x^p)^{\frac{1}{p}}\tr(r_{t,j}q_{n,i}E_\K(x)^p)^{\frac{p-1}{p}}\\
        &=\tr(r_{t,j}q_{n,i}E_\Ll(x^p))^{\frac{1}{p}}\tr(r_{t,j}q_{n,i}E_\Ll(E_\K(x)^p))^{\frac{p-1}{p}}\\
        &\le\left(\frac{i}{n}\right)^{\frac{1}{p}}\tr(r_{t,j}q_{n,i})^{\frac{1}{p}}\cdot t^{\frac{p-1}{p}(j-1)}\tr(r_{t,j}q_{n,i})^{\frac{p-1}{p}}=\left(\frac{i}{n}\right)^{\frac{1}{p}}t^{\frac{p-1}{p}(j-1)}\tr(r_{t,j}q_{n,i}).
    \end{align*}
    The first inequality above follows from Lemma \ref{Holderlike}. The first identity follows from the fact that $r_{t,j},q_{n,i}\in \Ll$. From the above inequality, \eqref{ineq1} and the spectral theorem, we obtain
    \begin{align*}
        t^{-\frac{p-1}{p}j}\tr(r_{t,j}E_\Ll(E_\K(x)^p))\le t^{-\frac{p-1}{p}}\tr\left(r_{t,j}\sum_{i=1}^n \left(\frac{i}{n}\right)^{1/p} q_{n,i}\right)\xrightarrow{n\to\infty} t^{-\frac{p-1}{p}}\tr\left(r_{t,j}E_\Ll(x^p)^{1/p}\right).
    \end{align*}
    The limit above converges in operator norm.
    
    Now from \eqref{ineq0} and the above, we have
    \begin{align*}
        &\tr\left(rE_\Ll(E_\K(x)^p)^{\frac{1}{p}}\right)\le \sum_{j=1}^\infty t^{-\frac{p-1}{p}j}\tr( r_{t,j} E_\Ll(E_\K(x)^p))\le \sum_{j=1}^\infty t^{-\frac{p-1}{p}} \tr\left( r_{t,j}E_\Ll(x^p)^{\frac{1}{p} }\right)\\
        &=t^{-\frac{p-1}{p}} \tr\left(\sum_{j=1}^\infty r_{t,j}E_\Ll(x^p)^{\frac{1}{p} }\right)\le t^{-\frac{p-1}{p}} \tr\left(rE_\Ll(x^p)^{\frac{1}{p} }\right)\xrightarrow{t\to 1^-}\tr\left(rE_\Ll(x^p)^{\frac{1}{p}}\right).
    \end{align*}
    We used normality of the trace in the first identity and $\sum_{j=1}^\infty r_{t,j}\le r$ in the last inequality. 
\end{proof}

\begin{rmk}
    The proof of Theorem \ref{minkowski} becomes easier for finite-dimensional von Neumann algebras, i.e. multi-matrix algebras. One can replace $q_{n,i}$ and $r_{t,j}$ by the spectral projections $q_i$ and $r_j$ of $E_\Ll(x^p)$ and $E_\Ll(E_\K(x)^p)$, respectively. One can further assume $q_i$ and $r_j$ are minimal by decomposing projections into minimal projections. 
    This allows us to skip the steps involving limits.
\end{rmk}

\begin{ex}
There are natural examples that satisfy Condition \eqref{a} in Theorem \ref{minkowski}.
\begin{enumerate}
    \item Let $\Ll\subset \M$ be an inclusion of tracial von Neumann algebras, $\K=\Ll'\cap \M$, and $\N=Z(\Ll)$. In particular, any $\rm{II}_1$ subfactor $\Ll\subset \M$ can be extended to such a commuting square.
    \item Let $X$ and $Y$ be objects in a spherical unitary tensor category $\mathcal{C}$. Set $\M=\End(X\otimes Y)$, $\K=\End(X)\otimes \id_Y$, $\Ll=\id_X\otimes \End(Y)$, and $\N=\id_{X}\otimes \End(1_{\mathcal{C}})\otimes \id_{Y}$.
\end{enumerate}
\end{ex}

\begin{cor}
    Let $\N\subset \Ll,\K\subset \M$ and $x\in \M_+$ be as in Theorem \ref{minkowski}. Then for $1\le p_1\le p_2<\infty$ we have 
    $$
    E_\Ll(E_\K(x^{p_1})^{p_2/p_1})^{1/p_2} \le E_\K(E_\Ll(x^{p_2})^{p_1/p_2})^{1/p_1}.
    $$
\end{cor}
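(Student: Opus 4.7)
The plan is to derive this as a direct consequence of Theorem \ref{minkowski} by a single substitution. Setting $y := x^{p_1}$ and $p := p_2/p_1$, the hypothesis $1 \le p_1 \le p_2$ ensures $y \in \M_+$ and $p \ge 1$. The commuting-square framework is unchanged, and in case \eqref{b} of Theorem \ref{minkowski} the condition $x \in Z(\M)$ transfers to $y \in Z(\M)$ via continuous functional calculus. Applying Theorem \ref{minkowski} to $y$ with exponent $p$ gives
\begin{equation*}
E_\Ll\!\left(E_\K(x^{p_1})^{p_2/p_1}\right)^{p_1/p_2} \le E_\K\!\left(E_\Ll(x^{p_2})^{p_1/p_2}\right).
\end{equation*}

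To finish, I would raise both sides to the power $1/p_1$. Both sides are positive elements of $\N$ (the left by construction, the right because conditional expectations preserve positivity), so one needs $t\mapsto t^{1/p_1}$ to be operator monotone on $[0,\infty)$. Since $p_1 \ge 1$ we have $1/p_1 \in (0,1]$, and this is a standard consequence of L\"owner's theorem on operator monotone functions. Applying this monotone function to both sides preserves the inequality and yields the stated conclusion.

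I do not expect any real obstacle: the entire content of the corollary is the substitution $y=x^{p_1}$, $p=p_2/p_1$, and the only non-bookkeeping step is invoking operator monotonicity of fractional powers with exponent in $(0,1]$, which is classical.
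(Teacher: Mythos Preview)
Your proposal is correct and matches the paper's proof essentially verbatim: the paper also substitutes $x^{p_1}$ for $x$ and $p_2/p_1$ for $p$ in Theorem~\ref{minkowski}, then takes the $1/p_1$-th power on both sides, citing the L\"owner--Heinz theorem for operator monotonicity of $t\mapsto t^{1/p_1}$.
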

\begin{proof}
    Replace $x$ and $p$ in Theorem \ref{minkowski} by $x_1^{p_1}$ and $p_2/p_1$, respectively. Taking the power with $1/p_1$ on both sides preserves the inequality by the L\"owner-Heinz theorem \cite{MR1545446,MR44747, MR53390, MR306957}.
\end{proof}

\section{Applications}\label{Sec_App}
\subsection{Construction of commuting squares}
A \textit{multi-matrix algebra} is an algebra of the form $\bigoplus_{i=1}^n M_{k_i}(\mathbb{C})$. All finite-dimensional von Neumann algebras are multi-matrix algebras. Consider an inclusion 
$$
A_0=\bigoplus_{i=1}^{n_0} M_{k_i}(\mathbb{C})\subset \bigoplus_{j=1}^{n_1} M_{l_j}(\mathbb{C})=A_1
$$
of multi-matrix algebras. Let $m_{ij}$ be the multiplicity of the summand $M_{k_i}(\mathbb{C})$ in $M_{l_j}(\mathbb{C})$. Then the matrix $T=(m_{ij})_{i,j}$ is called an \textit{inclusion matrix}. Let $\tr:A_1\rightarrow \mathbb{C}$ be a faithful tracial state on $A_1$ and let $v_j=\tr(q_j)$ where $q_j$ is a minimal projection in the direct summand $M_{l_j}(\mathbb{C})$ of $A_1$. Then the (column) vector $v=(v_j)_j$ is called the \textit{trace vector} corresponding to $\tr$. It follows that $Tv$ is the trace vector corresponding to the induced trace $\tr|_{A_0}$ of $A_0$.

\begin{lem}[{\cite[Proposition 5.4.3]{MR1473221}}]\label{cond exp formula}
    Let $A_0=\bigoplus_{i=1}^{n_0} M_{k_i}(\mathbb{C}) \stackrel{T}{\subset} \bigoplus_{j=1}^{n_1} M_{l_j}(\mathbb{C})= A_1$ be an inclusion of multi-matrix algebras, $T=(m_{ij})_{i,j}$ be the inclusion matrix, $z_i^0$ and $z_j^1$ be the central projections corresponding to the $i$-th and $j$-th summand of $A_0$ and $A_1$, respectively, $\tr$ be a faithful tracial state on $A_1$, and $E:A_1\rightarrow A_0$ be the trace-preserving conditional expectation.  Then we have
    $$
    E(z_j^1)=\sum_{i}\frac{m_{ij}v_j^1}{v_i^0}z_i^0
    $$
    where $v^0=(v_i^0)_i$ and $v^1=(v_j^1)_j$ are the trace vectors corresponding to $\tr|_{A_0}$ and $\tr$, respectively. 
\end{lem}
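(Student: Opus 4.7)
The plan is to determine $E(z_j^1)$ by first showing it lies in $Z(A_0)$, then computing its coefficients in the basis of minimal central projections $\{z_i^0\}$ by pairing against a minimal projection of $A_0$ and exploiting the trace-preserving property of $E$.

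First I would invoke Lemma \ref{cond exp center} to conclude $E(z_j^1) \in Z(A_0)$, so that
$E(z_j^1) = \sum_i c_{ij}\, z_i^0$
for some scalars $c_{ij}$; the goal reduces to identifying each $c_{ij}$. To extract $c_{ij}$, I would fix $i$ and pair with a minimal projection $p_i \in M_{k_i}(\mathbb{C}) \subset A_0$, which has $\tr(p_i) = v_i^0$ and satisfies $z_k^0 p_i = \delta_{ki}\, p_i$. Using the $A_0$-bimodule property and trace-preservation of $E$, this gives
$c_{ij}\, v_i^0 = \tr\!\bigl(E(z_j^1)\, p_i\bigr) = \tr\!\bigl(E(z_j^1 p_i)\bigr) = \tr(z_j^1 p_i).$

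The remaining step is to compute $\tr(z_j^1 p_i)$ directly in $A_1$. Here the combinatorial meaning of the inclusion matrix enters: the summand $M_{k_i}(\mathbb{C})$ of $A_0$ sits inside the summand $M_{l_j}(\mathbb{C})$ of $A_1$ with multiplicity $m_{ij}$, so a minimal projection of $M_{k_i}(\mathbb{C})$ becomes a rank-$m_{ij}$ projection inside $M_{l_j}(\mathbb{C})$. Hence $z_j^1 p_i$ decomposes as a sum of $m_{ij}$ minimal projections of $M_{l_j}(\mathbb{C})$, each of trace $v_j^1$, so $\tr(z_j^1 p_i) = m_{ij}\, v_j^1$, yielding $c_{ij} = m_{ij} v_j^1 / v_i^0$.

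There is no substantial obstacle; the computation is essentially bookkeeping. The only point requiring care is the correct interpretation of the inclusion matrix, namely that the multiplicity $m_{ij}$ records the rank in $M_{l_j}(\mathbb{C})$ of the image of a minimal projection of the $i$-th summand of $A_0$ — this is what converts the combinatorial data of $T$ into the trace computation.
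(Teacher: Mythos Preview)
Your argument is correct and is the standard proof of this fact. Note, however, that the paper does not supply its own proof of this lemma: it simply cites \cite[Proposition 5.4.3]{MR1473221} and moves on. So there is no in-paper proof to compare against; your write-up is essentially the argument one finds in the cited reference.
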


\begin{thm}\label{minkowski criterion cs}
    Let 
    \begin{equation}
        \begin{tikzcd}
            A_{01}\arrow[r,symbol=\subset,"T_{01}^{11}"{yshift = 5pt}]&A_{11}\\
            A_{00}\arrow[r,symbol=\subset,"T_{00}^{10}"'{yshift = -5pt}] \arrow[u,symbol=\subset,"T_{00}^{01}" {xshift = -5pt}]&A_{10}\arrow[u,symbol=\subset,"T_{10}^{11}"'{xshift = 5pt}]
        \end{tikzcd}
    \end{equation}
    be a commuting square of multi-matrix algebras. Here $T_{ij}^{kl}$ is the inclusion matrix for $A_{ij}\subset A_{kl}$.
    Let $\tr:A_{11}\rightarrow \mathbb{C}$ be a trace on $A_{11}$ with the trace vector $v^{11}=(v_k^{11})_{k=1}^{n_{11}}$ and let $v^{ij}=(v_{k}^{ij})_{k=1}^{n_{ij}}$ be the induced trace vector of $A_{ij}$. Then for all $1\le p<\infty$, $(a_l)_{l=1}^{n_{11}}$ with $a_l\ge 0$ and $1\le i\le n_{00}$, we have
    \begin{equation}
        \left(\sum_{j=1}^{n_{10}}\left(\sum_{l=1}^{n_{11}} \frac{(T_{10}^{11})_{jl}\ a_lv_l^{11}}{v_j^{10}}\right)^p \frac{(T_{00}^{10})_{ij}\ v_j^{10}}{v_i^{00}}\right)^{\frac{1}{p}} \le \sum_{k=1}^{n_{01}}\left(\sum_{l=1}^{n_{11}}  \frac{(T_{01}^{11})_{kl}\ a_l^pv_l^{11}}{v_k^{01}}\right)^{\frac{1}{p}}\frac{(T_{00}^{01})_{ik}\ v_k^{01}}{v_i^{00}}. \label{minkowski criterion cs ineq}
    \end{equation}
\end{thm}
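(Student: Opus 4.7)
The strategy is to apply Theorem \ref{minkowski} (in the central form, i.e.\ condition \eqref{b}) to the commuting square
$A_{00}\subset A_{01}, A_{10}\subset A_{11}$
with a cleverly chosen central element of $A_{11}$, and then read off the claimed scalar inequality by comparing coefficients on the minimal central projections of $A_{00}$.

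\noindent
\textbf{Step 1: Choice of $x$.} Let $z_l^{11}$ ($1\le l\le n_{11}$) denote the minimal central projections of $A_{11}$, and set
\[
x \;=\; \sum_{l=1}^{n_{11}} a_l\, z_l^{11} \;\in\; Z(A_{11})_+ .
\]
Since $x\in Z(\M)$, hypothesis \eqref{b} of Theorem \ref{minkowski} is satisfied, so
\[
E_{A_{01}}\!\bigl(E_{A_{10}}(x)^p\bigr)^{1/p} \;\le\; E_{A_{10}}\!\bigl(E_{A_{01}}(x^p)^{1/p}\bigr).
\]

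\noindent
\textbf{Step 2: Explicit computation via Lemma \ref{cond exp formula}.} Applying Lemma \ref{cond exp formula} to $A_{10}\subset A_{11}$ gives
\[
E_{A_{10}}(x) \;=\; \sum_{j=1}^{n_{10}} c_j\, z_j^{10},\qquad c_j \;=\; \sum_{l=1}^{n_{11}}\frac{(T_{10}^{11})_{jl}\, a_l v_l^{11}}{v_j^{10}},
\]
which is central in $A_{10}$ (as predicted by Lemma \ref{cond exp center}); hence $E_{A_{10}}(x)^p=\sum_j c_j^p z_j^{10}$ by orthogonality of the $z_j^{10}$. Since the square commutes, the conditional expectation $E_{A_{01}}$ restricted to $A_{10}$ coincides with $E_{A_{00}}$, so another application of Lemma \ref{cond exp formula} yields
\[
E_{A_{01}}\!\bigl(E_{A_{10}}(x)^p\bigr) \;=\; \sum_{i=1}^{n_{00}}\left(\sum_{j=1}^{n_{10}} c_j^p\,\frac{(T_{00}^{10})_{ij}\, v_j^{10}}{v_i^{00}}\right) z_i^{00}.
\]
This is central in $A_{00}$, so taking the $p$-th root is just applying $t\mapsto t^{1/p}$ to each coefficient. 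An entirely analogous computation, starting from $x^p=\sum_l a_l^p z_l^{11}$ and using first the inclusion $A_{01}\subset A_{11}$ and then $A_{00}\subset A_{01}$ (again via the commuting square identity $E_{A_{10}}|_{A_{01}}=E_{A_{00}}$), produces
\[
E_{A_{10}}\!\bigl(E_{A_{01}}(x^p)^{1/p}\bigr) \;=\; \sum_{i=1}^{n_{00}}\left(\sum_{k=1}^{n_{01}} d_k^{1/p}\,\frac{(T_{00}^{01})_{ik}\, v_k^{01}}{v_i^{00}}\right) z_i^{00},\qquad d_k \;=\; \sum_{l=1}^{n_{11}}\frac{(T_{01}^{11})_{kl}\, a_l^p v_l^{11}}{v_k^{01}}.
\]

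\noindent
\textbf{Step 3: Extracting the scalar inequality.} Both sides of the inequality from Step 1 are positive central elements of $A_{00}$, expanded in the orthogonal basis $\{z_i^{00}\}$. Therefore, the operator inequality is equivalent to the coordinatewise inequality of coefficients, indexed by $i$. Substituting $c_j$ and $d_k$ into the expressions above reproduces exactly \eqref{minkowski criterion cs ineq}.

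\noindent
\textbf{Expected obstacles.} There is no real obstacle beyond bookkeeping: once one recognises that choosing $x$ to be a nonnegative linear combination of central projections reduces everything to scalar coefficients on the $z_i^{00}$, the result is a direct translation of Theorem \ref{minkowski}. The only subtlety is to use the commuting square condition to identify $E_{A_{01}}|_{A_{10}}$ (resp.\ $E_{A_{10}}|_{A_{01}}$) with $E_{A_{00}}$ so that Lemma \ref{cond exp formula} can be applied twice in succession.
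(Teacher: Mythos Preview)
Your proposal is correct and follows essentially the same approach as the paper: choose $x=\sum_l a_l z_l^{11}\in Z(A_{11})_+$, apply Theorem~\ref{minkowski} in case~\eqref{b}, expand both sides via Lemma~\ref{cond exp formula}, and compare coefficients of the $z_i^{00}$. The only difference is cosmetic---the paper leaves the identification $E_{A_{01}}|_{A_{10}}=E_{A_{00}}$ implicit in the computation, whereas you flag it explicitly.
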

\begin{proof}
    Let $E_{ij}:A_{11}\rightarrow A_{ij}$ be the $\tr$-preserving conditional expectations onto $A_{ij}$ and $z_k^{ij}$ be the central projection of $A_{ij}$ corresponding to the $k$-th summand.
    By Theorem \ref{minkowski} we have
    $$
    E_{01}\left(E_{10}\left(\sum_{l=1}^{n_{11}} a_lz_l^{11}\right)^p\right)^{\frac{1}{p}}\le E_{10}\left(E_{01}\left(\sum_{l=1}^{n_{11}} a_l^pz_l^{11}\right)^{\frac{1}{p}}\right).
    $$
    We expand the conditional expectation using Lemma \ref{cond exp formula}:
    \begin{align*}
        &E_{01}\left(E_{10}\left(\sum_{l=1}^{n_{11}} a_lz_l^{11}\right)^p\right)^{\frac{1}{p}}=E_{01}\left(\left(\sum_{l=1}^{n_{11}} a_l\sum_{j=1}^{n_{10}}\frac{(T_{10}^{11})_{jl}\ v_l^{11}}{v_j^{10}}z_j^{10}\right)^p\right)^{\frac{1}{p}}\\
        &=E_{01}\left(\sum_{j=1}^{n_{10}}\left(\sum_{l=1}^{n_{11}} \frac{(T_{10}^{11})_{jl}\ a_lv_l^{11}}{v_j^{10}}\right)^p z_j^{10}\right)^{\frac{1}{p}}
        =\left(\sum_{j=1}^{n_{10}}\left(\sum_{l=1}^{n_{11}} \frac{(T_{10}^{11})_{jl}\ a_lv_l^{11}}{v_j^{10}}\right)^p \sum_{i=1}^{n_{00}}\frac{(T_{00}^{10})_{ij}\ v_j^{10}}{v_i^{00}}z_i^{00}\right)^{\frac{1}{p}}\\
        &=\sum_{i=1}^{n_{00}}\left(\sum_{j=1}^{n_{10}}\left(\sum_{l=1}^{n_{11}} \frac{(T_{10}^{11})_{jl}\ a_lv_l^{11}}{v_j^{10}}\right)^p \frac{(T_{00}^{10})_{ij}\ v_j^{10}}{v_i^{00}}\right)^{\frac{1}{p}}z_i^{00}.\\
        &\\
        &E_{10}\left(E_{01}\left(\sum_{l=1}^{n_{11}} a_l^pz_l^{11}\right)^{\frac{1}{p}}\right)=
        E_{10}\left(\left(\sum_{l=1}^{n_{11}} a_l^p \sum_{k=1}^{n_{01}}\frac{(T_{01}^{11})_{kl}\ v_l^{11}}{v_k^{01}}z_k^{01}\right)^{\frac{1}{p}}\right)\\
        &=E_{10}\left(\sum_{k=1}^{n_{01}}\left(\sum_{l=1}^{n_{11}}  \frac{(T_{01}^{11})_{kl}\ a_l^pv_l^{11}}{v_k^{01}}\right)^{\frac{1}{p}}z_k^{01}\right)=\sum_{k=1}^{n_{01}}\left(\sum_{l=1}^{n_{11}}  \frac{(T_{01}^{11})_{kl}\ a_l^pv_l^{11}}{v_k^{01}}\right)^{\frac{1}{p}}\sum_{i=1}^{n_{00}}\frac{(T_{00}^{01})_{ik}\ v_k^{01}}{v_i^{00}}z_i^{00}\\
        &=\sum_{i=1}^{n_{00}}\left(\sum_{k=1}^{n_{01}}\left(\sum_{l=1}^{n_{11}}  \frac{(T_{01}^{11})_{kl}\ a_l^pv_l^{11}}{v_k^{01}}\right)^{\frac{1}{p}}\frac{(T_{00}^{01})_{ik}\ v_k^{01}}{v_i^{00}}\right)z_i^{00}.
    \end{align*}
    Comparing the coefficients of $z_i^{00}$ gives the desired inequality.
\end{proof}

\begin{rmk}
    \begin{enumerate}
        \item Switching the role of $A_{10}$ and $A_{01}$ in Theorem \ref{minkowski criterion cs} gives an additional obstruction for existence of commuting square. This can simply obtained by switching the indices $01$ and $10$ in \eqref{minkowski criterion cs ineq}.
        \item The \textit{basic construction} of a \textit{nondegenerate} commuting square is also a nondegenerate commuting square, and their inclusion matrices are obtained by taking transpose and reordering the original inclusion matrices \cite{MR1473221}. Therefore, to see whether $(T_{00}^{01},T_{01}^{11},T_{00}^{10},T_{10}^{11})$ fails to constitute inclusion graphs of a nondegenerate commuting square, we can also test \eqref{minkowski criterion cs ineq} on 
        \begin{align*}
            &\left((T_{00}^{01})^t,T_{00}^{10},T_{01}^{11},(T_{10}^{11})^t\right),& 
            &\left(T_{10}^{11},(T_{01}^{11})^t,(T_{00}^{10})^t,T_{00}^{01}\right),&  
            &\left((T_{10}^{11})^t,(T_{00}^{10})^t,(T_{01}^{11})^t,(T_{00}^{01})^t\right).
        \end{align*}
    \end{enumerate}
\end{rmk}

\subsection{Unitary categorification of fusion rings}
A \textit{unitary tensor category} is an idempotent complete rigid C*-tensor category. Any object in a unitary tensor category has a balanced dual that is unique up to unitary isomorphism \cite[Lemma 3.9]{MR2091457}.
These choices of duals assemble into a dual functor whose associated unitary pivotal structure is spherical  \cite{MR4133163}. 
Hence, each object has positive quantum dimension.

\begin{cor}\label{minkowski criterion cat}
    Let $\mathcal{C}$ be a spherical unitary (multi-)tensor category with $s$ isomorphism classes of simple objects where $s\in \mathbb{N}\cup\{\infty\}$. Suppose that the simple objects $c_i\in \Irr(\mathcal{C})$ satisfy the fusion rule $c_i\otimes c_j\cong \bigoplus_{l=1}^s N_{ij}^l c_l$ and let $d_i$ be the quantum dimension of $c_i$. Then for any triple of simple objects $c_i,c_j,c_k\in\Irr(\mathcal{C})$, $1\le p<\infty$, and $a_n\ge0$ for $1\le n\le s$, we have
    \begin{equation}
        \left(\sum_{l=1}^{s}\left(\sum_{n=1}^{s} \frac{N_{lk}^n\ a_nd_n}{d_l}\right)^p \frac{N_{ij}^ld_l}{d_j}\right)^{\frac{1}{p}} \le \sum_{m=1}^{s}\left(\sum_{n=1}^{s}  \frac{N_{im}^n\ a_n^pd_n}{d_m}\right)^{\frac{1}{p}}\frac{N_{jk}^m d_m}{d_j}.\label{minkowski criterion cat ineq}
    \end{equation}
\end{cor}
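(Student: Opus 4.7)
The plan is to deduce the corollary from Theorem \ref{minkowski criterion cs} by assembling a commuting square of endomorphism algebras adapted to the three simple objects. Fix $c_i, c_j, c_k \in \Irr(\mathcal{C})$ and set
\begin{align*}
A_{11} &= \End(c_i \otimes c_j \otimes c_k), &
A_{10} &= \End(c_i \otimes c_j) \otimes \id_{c_k},\\
A_{01} &= \id_{c_i} \otimes \End(c_j \otimes c_k), &
A_{00} &= \id_{c_i} \otimes \End(c_j) \otimes \id_{c_k} = \mathbb{C} \id.
\end{align*}
All four algebras are multi-matrix since $c_i \otimes c_j \otimes c_k$ is a finite direct sum of simples even when $s = \infty$. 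I equip $A_{11}$ with the normalized categorical trace $\tau = \Tr/(d_i d_j d_k)$, which is a faithful tracial state by sphericality and unitarity.

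The first task is to verify the commuting square condition. The $\tau$-preserving conditional expectation $E_{A_{10}}$ (respectively $E_{A_{01}}$) is the categorical right (respectively left) partial trace over $c_k$ (respectively $c_i$), divided by $d_k$ (respectively $d_i$). For any $h \in \End(c_j \otimes c_k)$, a direct computation gives
\[
E_{A_{10}}(\id_{c_i} \otimes h) \;=\; \id_{c_i} \otimes \bigl((\id_{c_j} \otimes \Tr_{c_k})(h)/d_k\bigr) \otimes \id_{c_k} \;\in\; A_{00},
\]
so $E_{A_{10}} \circ E_{A_{01}}$ takes values in $A_{00}$. Since any $\tau$-preserving $\mathbb{C}$-linear map into the one-dimensional algebra $A_{00} = \mathbb{C}\id$ is forced to be $\tau(\cdot)\id$, this composition equals $E_{A_{00}}$, establishing the commuting square.

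Labelling summands of $A_{11}$, $A_{10}$, $A_{01}$ by $n$, $l$, $m$ respectively, I read off the inclusion matrices and trace vectors as
\[
(T_{10}^{11})_{ln} = N_{lk}^n,\quad (T_{01}^{11})_{mn} = N_{im}^n,\quad (T_{00}^{10})_{\ast l} = N_{ij}^l,\quad (T_{00}^{01})_{\ast m} = N_{jk}^m,
\]
\[
v^{11}_n = \frac{d_n}{d_i d_j d_k},\quad v^{10}_l = \frac{d_l}{d_i d_j},\quad v^{01}_m = \frac{d_m}{d_j d_k},\quad v^{00}_\ast = 1,
\]
with compatibility of the induced traces following from $\sum_l N_{ij}^l d_l = d_i d_j$ and its analogue for $c_j \otimes c_k$.

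The final step is purely algebraic: substituting this data into \eqref{minkowski criterion cs ineq}, collecting the powers of $d_i, d_j, d_k$, and multiplying both sides by the common scalar $d_k \cdot d_i^{1/p}$ produces \eqref{minkowski criterion cat ineq} verbatim. The only non-routine point is the commuting square check, but it collapses to uniqueness of the trace-preserving map into $\mathbb{C}\id$; I therefore anticipate no serious obstacle, as the corollary is essentially a direct substitution into Theorem \ref{minkowski criterion cs}.
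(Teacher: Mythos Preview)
Your proposal is correct and follows essentially the same route as the paper: the same commuting square of endomorphism algebras, the same identification of inclusion matrices and trace vectors, and the same final rescaling by $d_i^{1/p}d_k$. You even supply a bit more detail than the paper does on why the square commutes, but the argument is otherwise identical.
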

\begin{proof}
    The quadrilateral of algebras
    \begin{equation}
        \begin{tikzcd}
            \id_{c_i}\otimes\End(c_j\otimes c_k)\arrow[r,symbol=\subset]&\End(c_i\otimes c_j\otimes c_k)\\
            \mathbb{C}\id_{c_i\otimes c_j\otimes c_k}=\id_{c_i}\otimes \End(c_j)\otimes \id_{c_k}\arrow[r,symbol=\subset] \arrow[u,symbol=\subset]&\End(c_i\otimes c_j)\otimes\id_{c_k}\arrow[u,symbol=\subset]
        \end{tikzcd}
    \end{equation}
    is a commuting square of multi-matrix algebras where the conditional expectations are the normalized partial traces, and hence, we can apply Theorem \ref{minkowski criterion cs}.
    Replacing $(T_{10}^{11})_{**}$, $(T_{00}^{10})_{1*}$, $(T_{01}^{11})_{**}$ and  $(T_{00}^{01})_{1*}$ in \eqref{minkowski criterion cs ineq} by $N_{*k}^*$, $N_{ij}^*$, $N_{i*}^*$ and $N_{jk}^*$, and replacing $v_*^{11}$, $v_*^{10}$, $v_*^{01}$ and $v_*^{00}$ by $\frac{d_*}{d_id_jd_k}$, $\frac{d_*}{d_id_j}$, $\frac{d_*}{d_jd_k}$ and $1$, respectively, gives
    $$
    \left(\sum_{l=1}^{s}\left(\sum_{n=1}^{s} \frac{N_{lk}^n\ a_nd_n}{d_ld_k}\right)^p \frac{N_{ij}^ld_l}{d_id_j}\right)^{\frac{1}{p}} \le \sum_{m=1}^{s}\left(\sum_{n=1}^{s}  \frac{N_{im}^n\ a_n^pd_n}{d_id_m}\right)^{\frac{1}{p}}\frac{N_{jk}^m d_m}{d_jd_k}.
    $$
    Multiplying both sides by $d_i^{\frac{1}{p}}d_k$ results in the desired inequality.
\end{proof}

\begin{defn}[\cite{MR933415,MR3242743}]
    \begin{enumerate}
        \item A \textit{unital based ring} $\mathcal{R}$ is a $\mathbb{Z}$-algebra that is free as a $\mathbb{Z}$-module, endowed with a fixed basis $\{b_i\}_{i\in I}$ with the following properties:
        \begin{enumerate}
            \item $b_ib_j=\sum_{k\in I}N_{ij}^k b_k$ for some $N_{ij}^k\in \mathbb{Z}_{\ge0}$ where $N_{ij}^k=0$ for all but finitely many $k$;
            \item There is a designated element $1\in I$ such that $b_1=1$;
            \item There exists an involution $i\mapsto \overline{i}$ of $I$ such that the induced map $\sum_{i\in I}n_ib_i\mapsto \sum_{i\in I}n_ib_{\overline{i}}$ is an anti-isomorphism of rings; 
            \item For every $i,j\in I$, $N_{ij}^1=\delta_{j,\overline{i}}$.
        \end{enumerate}
        \item A unital based ring with a finite basis is called a \textit{fusion ring}.
    \end{enumerate}
\end{defn}

Grothendieck rings of fusion categories are fusion rings. 
Corollary \ref{minkowski criterion cat} can be used to disqualify unital based rings from having unitary categorifications: If there is a triple of basis elements $c_i,c_j,c_k$ of a unital based ring $\mathcal{R}$, nonnegative numbers $(a_n)_n$ and $1\le p<\infty$ that do not satisfy Corollary \ref{minkowski criterion cat}, then $\mathcal{R}$ does not admit a unitary categorification. In particular, this criterion can be applied to fusion rings. 
However, difficulties lie in choosing the appropriate $(a_n)_n$ and $p$. We applied the following algorithm to each triple $(c_i,c_j,c_k)$ for our calculations:
\begin{enumerate}
    \item Randomly choose an element $(a_n)_n\in [0,1]^s$ and $1/p\in (0.1,1]$.
    \item Apply the \textit{gradient descent} method to $\RHS/\LHS$ where $\RHS$ and $\LHS$ are the right- and left-hand sides of \eqref{minkowski criterion cat ineq}.
    \item If $\RHS/\LHS<1$, then exclude the fusion ring. 
\end{enumerate}
The numbers $a_n$ and $1/p$ are sampled from the uniform distribution.
To approximate the partial derivative of $\RHS/\LHS$ with respect to $a_n$ and $1/p$, we computed the difference quotient with $\Delta a_n=\Delta (1/p)=0.0001$. The gradient descent method is applied with step size $0.05$ and $1,000$ iterations. The overall process is implemented in MATLAB and is repeated up to $20$ times for each choice of a fusion ring. 

We tested the algorithm on the dataset of $28,450$ fusion rings from \cite{MR4644689} and excluded $5,793$ of them (20.36\%). Among them, $540$ fusion rings ($1.898\%$) cannot be excluded by the \textit{primary $3$-criterion} \cite{MR4686667}. Moreover, $23$ of these cannot be excluded by the following criteria in \cite{MR4434141}: \textit{$d$-number}, \textit{extended cyclotomic}, \textit{Lagrange}, and \textit{pseudo-unitarity Drinfeld}. Some of these admit \textit{induction matrices} and thus cannot be excluded by the criterion in \cite[Remark 3.2]{ABDP25}. For more details of the results, see Tables \ref{excluded} and \ref{exclusion rate}. The author thanks Palcoux for testing the criteria in \cite{MR4434141} and \cite{ABDP25}.

Due to the random nature of the algorithm, some fusion rings are not excluded by our criterion, even though they should.\footnote{We confirmed this by running the algorithm multiple times for certain subsets of fusion rings. At a certain trial run, the number of excluded fusion rings was higher than what is provided in Table \ref{excluded}. However, we lost the data and do not know which additional fusion rings were excluded.} These fusion rings can be excluded through more iterations and by adjusting the step size.

\begin{ex}
The following fusion rings pass the primary $3$-criterion, but fail to satisfy \eqref{minkowski criterion cat ineq}. Here, the $(n,m)$-entry of the $l$-th matrix is $N_{lm}^n$. 
\begin{enumerate}
    \item The following is the simplest such a fusion ring:
    $$
\begin{array}{cccc}
	 \begin{bmatrix} 
		 1 & 0& 0& 0\\
		 0 & 1& 0& 0\\
		 0 & 0& 1& 0\\
		 0 & 0& 0& 1
	\end{bmatrix} 
 & 	 \begin{bmatrix} 
		 0 & 1& 0& 0\\
		 1 & 1& 1& 1\\
		 0 & 1& 0& 0\\
		 0 & 1& 0& 0
	\end{bmatrix} 
 & 	 \begin{bmatrix} 
		 0 & 0& 0& 1\\
		 0 & 1& 0& 0\\
		 1 & 0& 0& 0\\
		 0 & 0& 1& 0
	\end{bmatrix} 
 & 	 \begin{bmatrix} 
		 0 & 0& 1& 0\\
		 0 & 1& 0& 0\\
		 0 & 0& 0& 1\\
		 1 & 0& 0& 0
	\end{bmatrix} 
\end{array}
$$
For $i=j=k=2$, $\frac{1}{p}=0.661975038859587$, and 
\begin{align*}
    a=(a_n)_{n=1}^4&=(0.778280214647139,
    0.006040665900893,
    0.698943990142325,
    0.856127138742457)
\end{align*}
we have $\RHS/\LHS=0.923133094619110<1$.
    \item The following is the simplest integral example among those that cannot be even ruled out by the criteria in \cite{MR4434141} and \cite{ABDP25}: 
    {\footnotesize
    \begin{align*}
        \begin{array}{ccccccc}
	 \begin{bmatrix} 
		 1 & 0& 0& 0& 0& 0& 0\\
		 0 & 1& 0& 0& 0& 0& 0\\
		 0 & 0& 1& 0& 0& 0& 0\\
		 0 & 0& 0& 1& 0& 0& 0\\
		 0 & 0& 0& 0& 1& 0& 0\\
		 0 & 0& 0& 0& 0& 1& 0\\
		 0 & 0& 0& 0& 0& 0& 1
	\end{bmatrix} 
 & 	 \begin{bmatrix} 
		 0 & 1& 0& 0& 0& 0& 0\\
		 1 & 0& 0& 0& 0& 0& 0\\
		 0 & 0& 0& 1& 0& 0& 0\\
		 0 & 0& 1& 0& 0& 0& 0\\
		 0 & 0& 0& 0& 1& 0& 0\\
		 0 & 0& 0& 0& 0& 1& 0\\
		 0 & 0& 0& 0& 0& 0& 1
	\end{bmatrix} 
 & 	 \begin{bmatrix} 
		 0 & 0& 1& 0& 0& 0& 0\\
		 0 & 0& 0& 1& 0& 0& 0\\
		 1 & 0& 0& 0& 0& 0& 0\\
		 0 & 1& 0& 0& 0& 0& 0\\
		 0 & 0& 0& 0& 1& 0& 0\\
		 0 & 0& 0& 0& 0& 1& 0\\
		 0 & 0& 0& 0& 0& 0& 1
	\end{bmatrix} 
 & 	 \begin{bmatrix} 
		 0 & 0& 0& 1& 0& 0& 0\\
		 0 & 0& 1& 0& 0& 0& 0\\
		 0 & 1& 0& 0& 0& 0& 0\\
		 1 & 0& 0& 0& 0& 0& 0\\
		 0 & 0& 0& 0& 1& 0& 0\\
		 0 & 0& 0& 0& 0& 1& 0\\
		 0 & 0& 0& 0& 0& 0& 1
	\end{bmatrix} \\
 	 \begin{bmatrix} 
		 0 & 0& 0& 0& 1& 0& 0\\
		 0 & 0& 0& 0& 1& 0& 0\\
		 0 & 0& 0& 0& 1& 0& 0\\
		 0 & 0& 0& 0& 1& 0& 0\\
		 1 & 1& 1& 1& 0& 0& 0\\
		 0 & 0& 0& 0& 0& 2& 0\\
		 0 & 0& 0& 0& 0& 0& 2
	\end{bmatrix} 
 & 	 \begin{bmatrix} 
		 0 & 0& 0& 0& 0& 0& 1\\
		 0 & 0& 0& 0& 0& 0& 1\\
		 0 & 0& 0& 0& 0& 0& 1\\
		 0 & 0& 0& 0& 0& 0& 1\\
		 0 & 0& 0& 0& 0& 0& 2\\
		 1 & 1& 1& 1& 2& 1& 1\\
		 0 & 0& 0& 0& 0& 3& 1
	\end{bmatrix} 
 & 	 \begin{bmatrix} 
		 0 & 0& 0& 0& 0& 1& 0\\
		 0 & 0& 0& 0& 0& 1& 0\\
		 0 & 0& 0& 0& 0& 1& 0\\
		 0 & 0& 0& 0& 0& 1& 0\\
		 0 & 0& 0& 0& 0& 2& 0\\
		 0 & 0& 0& 0& 0& 1& 3\\
		 1 & 1& 1& 1& 2& 1& 1
	\end{bmatrix} 
\end{array}
    \end{align*}
    }
    For $i=7$, $j=6$, $k=7$, $\frac{1}{p}=0.393251890984615$ and
    \begin{align*}
        a=(&0.913028612500332,   0.551159196648294,   0.759269438861921,   0.798984777363567, \\
        &0.526885050265131,   0.170238904317482,   0.000598136821664)   
    \end{align*}
    we have $\RHS/\LHS=0.767708161620070<1$.
\end{enumerate}
\end{ex}

\begin{center} 
\begin{sidewaystable}[]
{\small\begin{center}
\begin{tabular}{|c || c| c| c| c| c| c| c| c|| c|} 
 \hline 
\backslashbox{$\max N_{ij}^k$}{Rank} & 2 & 3 & 4 & 5 & 6 & 7 & 8 & 9 & Total \\ 
\hline 
1 & 0/2 & 0/4 & 1/10 & 4/16 & 10/39 & 19/43 & 22/96 & 51/142 & 107/352 \\ 
\hline 
2 & 0/1 & 1/3 & 3/17 & 15/37 & 56/154 & 160/319 & 397/874 & - & 632/1405 \\ 
\hline 
3 & 0/1 & 1/4 & 5/24 & 20/82 & 140/384 & 217/562 & - & - & 383/1057 \\ 
\hline 
4 & 0/1 & 2/6 & 10/45 & 34/134 & 268/872 & 374/1236 & - & - & 688/2294 \\ 
\hline 
5 & 0/1 & 1/5 & 8/55 & 35/209 & 145/533 & - & - & - & 189/803 \\ 
\hline 
6 & 0/1 & 1/9 & 17/81 & 71/336 & 201/872 & - & - & - & 290/1299 \\ 
\hline 
7 & 0/1 & 1/6 & 14/92 & 88/477 & 202/976 & - & - & - & 305/1552 \\ 
\hline 
8 & 0/1 & 1/10 & 25/137 & 129/733 & 367/1672 & - & - & - & 522/2553 \\ 
\hline 
9 & 0/1 & 2/12 & 28/151 & 230/1463 & - & - & - & - & 260/1627 \\ 
\hline 
10 & 0/1 & 1/9 & 34/186 & 302/1794 & - & - & - & - & 337/1990 \\ 
\hline 
11 & 0/1 & 2/10 & 43/238 & 334/2283 & - & - & - & - & 379/2532 \\ 
\hline 
12 & 0/1 & 1/20 & 61/291 & 479/3049 & - & - & - & - & 541/3361 \\ 
\hline 
13 & 0/1 & 1/9 & 41/246 & 208/1300 & - & - & - & - & 250/1556 \\ 
\hline 
14 & 0/1 & 2/13 & 47/340 & 216/1323 & - & - & - & - & 265/1677 \\ 
\hline 
15 & 0/1 & 1/16 & 58/349 & 223/1550 & - & - & - & - & 282/1916 \\ 
\hline 
16 & 0/1 & 3/25 & 74/525 & 286/1925 & - & - & - & - & 363/2476 \\ 
\hline 
\hline 
Total & 0/17 & 21/161 & 469/2787 & 2674/16711 & 1389/5502 & 770/2160 & 419/970 & 51/142 & 5793/28450 \\ 
\hline 
\end{tabular} 
\end{center}
}
\caption{
\centering The number of excluded fusion rings out of the total number of fusion rings in the dataset \\
by the Minkowski criterion, classified by the rank and the multiplicity (excluded/total)\\
(Total running time: 97 d 15 h 22 m 12 s)\label{excluded}} 

\bigskip

{\small
\begin{center}
\begin{tabular}{|c || c| c| c| c| c| c| c| c|| c|} 
 \hline 
\backslashbox{$\max N_{ij}^k$}{Rank} & 2 & 3 & 4 & 5 & 6 & 7 & 8 & 9 & Total \\ 
\hline 
1 & 0.00 & 0.00 & 10.00 & 25.00 & 25.64 & 44.19 & 22.92 & 35.92 & 30.40 \\ 
\hline 
2 & 0.00 & 33.33 & 17.65 & 40.54 & 36.36 & 50.16 & 45.42 & - & 44.98 \\ 
\hline 
3 & 0.00 & 25.00 & 20.83 & 24.39 & 36.46 & 38.61 & - & - & 36.23 \\ 
\hline 
4 & 0.00 & 33.33 & 22.22 & 25.37 & 30.73 & 30.26 & - & - & 29.99 \\ 
\hline 
5 & 0.00 & 20.00 & 14.55 & 16.75 & 27.20 & - & - & - & 23.54 \\ 
\hline 
6 & 0.00 & 11.11 & 20.99 & 21.13 & 23.05 & - & - & - & 22.32 \\ 
\hline 
7 & 0.00 & 16.67 & 15.22 & 18.45 & 20.70 & - & - & - & 19.65 \\ 
\hline 
8 & 0.00 & 10.00 & 18.25 & 17.60 & 21.95 & - & - & - & 20.45 \\ 
\hline 
9 & 0.00 & 16.67 & 18.54 & 15.72 & - & - & - & - & 15.98 \\ 
\hline 
10 & 0.00 & 11.11 & 18.28 & 16.83 & - & - & - & - & 16.93 \\ 
\hline 
11 & 0.00 & 20.00 & 18.07 & 14.63 & - & - & - & - & 14.97 \\ 
\hline 
12 & 0.00 & 5.00 & 20.96 & 15.71 & - & - & - & - & 16.10 \\ 
\hline 
13 & 0.00 & 11.11 & 16.67 & 16.00 & - & - & - & - & 16.07 \\ 
\hline 
14 & 0.00 & 15.38 & 13.82 & 16.33 & - & - & - & - & 15.80 \\ 
\hline 
15 & 0.00 & 6.25 & 16.62 & 14.39 & - & - & - & - & 14.72 \\ 
\hline 
16 & 0.00 & 12.00 & 14.10 & 14.86 & - & - & - & - & 14.66 \\ 
\hline 
\hline 
Total & 0.00 & 13.04 & 16.83 & 16.00 & 25.25 & 35.65 & 43.20 & 35.92 & 20.36 \\ 
\hline 
\end{tabular} 
\end{center}
}
\caption{The exclusion rate of the Minkowski criterion (\%)\label{exclusion rate}}
\end{sidewaystable} 
\end{center}

\FloatBarrier
\bibliographystyle{alpha} 
\bibliography{References}{}

\end{document}